\numberwithin{equation}{section}
\def\bq{\begin{equation}}
\def\eq{\end{equation}}
\def\br{\begin{eqnarray}}
\def\er{\end{eqnarray}}
\def\brr{\bq\begin{array}{rlll}}
\def\err{\end{array}\eq}
\def\pmb#1{\mbox{\boldmath $#1$}}\def\text#1{\hbox{#1}}
\newtheorem{theorem}{Theorem}[section]
\newcommand{\p}{\pmb{p}}
\newcommand{\eps}{\epsilon}
\def\bfE{\mbox{\boldmath$E$}}
\def\bfJ{\mbox{\boldmath$J$}}
\def\bfu{\mbox{\boldmath$u$}}
\def\bfv{\mbox{\boldmath$v$}}
\def\bfV{\mbox{\boldmath$V$}}
\def\bfphi{\mbox{\boldmath${\bf\phi}$}}
\def\bfchi{\mbox{\boldmath${\bf\chi}$}}
\def\bfx{\mbox{\boldmath$x$}}
\def\bfp{\mbox{\boldmath$p$}}
\def\bff{\mbox{\boldmath$f$}}
\def\pa{\partial}
\def\eps{\epsilon}
\def\O{\Omega}
\def\ov{\overline}
\def\e0{\varepsilon_0}
\def\bfV{\mbox{\boldmath$V$}}
\title{ A novel finite element method for simulating surface plasmon polaritons on complex graphene sheets 
}
\author{
Jichun Li \thanks{Department of Mathematical Sciences,
 University of Nevada Las Vegas, Nevada 89154-4020, USA
({\tt jichun.li@unlv.edu})}
\and Michael Neunteufel  \thanks{The Fariborz Maseeh Department of Mathematics and Statistics,
Portland State University, Portland, OR 97021, USA ({\tt  mneunteu@pdx.edu}).}
\and
 Li Zhu \thanks{The Fariborz Maseeh Department of Mathematics and Statistics,
Portland State University, Portland, OR 97021, USA ({\tt
   lizhu@pdx.edu}).}
}
\begin{document}
\maketitle

\abstract{Surface plasmon polaritons (SPPs) are generated on the graphene surface, and
  provide a window into the nano-optical and electrodynamic response of their host material and its dielectric environment. An accurate simulation of SPPs presents several unique challenges, since SPPs often occur at complex interfaces between materials of different dielectric constants and appropriate boundary conditions at the graphene interfaces are crucial. Here we develop a simplified graphene model and propose a new finite element method accordingly. Stability for the continuous model is established, and extensive numerical results are presented to demonstrate that the new model can capture the SPPs very well for various complex graphene sheets.
}

{\noindent\bf Keywords --}
 Maxwell's equations, finite element time-domain methods, edge elements, graphene, surface plasmon polaritons.

{\noindent\bf Mathematics Subject Classification (2000): 
78M10, 65N30, 65F10, 78-08.}

\section{Introduction}
Graphene is a single layer of carbon atoms arranged in a hexagonal lattice pattern, often described as a "honeycomb" structure. The 2-D material graphene was first successfully isolated by Novoselov and  Geim {\it et al.}  \cite{Novoselov}.
In 2010, Geim and Novoselov were awarded Nobel Prizes in Physics for their groundbreaking 
 experiments regarding graphene.
Due to graphene's unique electrical, electromagnetic,
and optical characteristics, it has attracted widespread attention, leading to the design of
many new systems and equipment with graphene. 
For example, graphene has
played a prominent role in the design of organic light-emitting
diodes, solar cells, antennas, and invisibility cloaks (cf. \cite{gra1,gra2,Graspp}).

In the past two decades, in addition to many great achievements on the physical research on graphene and graphene-based devices, another hot topic has been the numerical simulation of graphene.
The famous Kubo formula \cite{Hanson_2008} gives the expression of graphene conductivity, which is a function
of many physical parameters such as wavelength, chemical potential, and temperature.
With the development of
computational electromagnetic in the past, such as the finite difference time-domain (FDTD) method  (e.g., \cite{Hong,Huang_ADI,Jenkinson_JCAM2018, WLi2013,FDTDbk2}) and
the finite element method (FEM)  (e.g., papers \cite{Boffi, Buffa,Carstensen,ChenDuZou, Li_JCP2014, Scheid,Huang_SISC2013,Shi_JCAM2018,YangWang}, and books \cite{Dembook2,Monk,Libook}), many robust and efficient numerical methods 
 have been proposed to simulate the electromagnetic response of graphene related devices.
The numerical approaches for modeling graphene can be classified into two big categories:
 (1) Treating graphene as a thin plate with finite thickness \cite{Rickhaus_2020}, and
converting its surface conductivity into a volumetric conductivity;
 (2) Taking graphene as a zero-thickness sheet \cite{Nayyeri}.
Due to the
easy realization of (1), many published papers and commercial software model graphene
as a thin sheet with a finite thickness \cite{Bouzianas,wang2015broadband}. However, direct discretization of graphene
with a small finite thickness results in extremely fine grids around graphene. This leads to extremely small time steps
for time-domain simulations with explicit schemes,  which
consume enormous amount of memory storage and CPU time.

The interesting physical research on graphene and graphene-based
devices has inspired  mathematicians  to conduct mathematical analysis  \cite{Bal_2023,Hong_2021,Lee_Weistein_2019} and modeling of graphene 
(e.g., \cite{Kong_SIAP2024,Maier_JCP2017,Song_CMAME2019,Santosa_SIAM2019} and references therein).
Here we are interested in simulating the surface plasmon polaritons (SPPs) generated on the graphene surface,
since SPPs  provide a window into the nano-optical, electrodynamic response of their host material and its dielectric environment \cite{Vitalone_ACS2024}. 
The plasmons occur in the highly sought after terahertz to mid-infrared regime. Note that
terahertz waves are used in a variety of applications such as nondestructive analysis, since terahertz waves can be used to analyze the internal structure of objects without damaging them. For example, THz cameras can be used to see what is inside sealed packages. Also, terahertz waves can be found in military applications, e.g.,
terahertz sensors are used in military communication, detecting biological and explosive warfare agents, and inspecting concealed weapons. However, because of practical difficulties in exciting and detecting the SPP waves in
graphene, numerical simulation of wave interactions with graphene materials plays a very important role in designing functional components with  graphene. 
Considering the disadvantage of FDTD methods in handling the complex geometry, which happens quite often in graphene devices, 
we recently  proposed and analyzed some finite element time-domain (FETD) methods for graphene simulation \cite{Yang_CMAME2020,Li_RINAM2021,Huang_CMA2022,Li_CMAWA2023}.
In  \cite{Yang_CMAME2020,Li_RINAM2021}, we treated the graphene with some thickness (though very thin); while in \cite{Li_CMAWA2023}, we successfully developed a new FETD method for  simulating surface plasmon polaritons (SPPs) on graphene sheets with zero-thickness.
We like to remark that the works of \cite{Maier_JCP2017,Song_CMAME2019} are based on frequency-domain finite element methods. 
The work of Wilson, Santosa, and Martin \cite{Santosa_SIAM2019} is for
 2-D time-domain SPP models written as an integro-differential equation in time. In \cite{Nicholls_JOSA2021}, Nicholls {\it et al.} proposed  the so-called
high–order perturbation of surfaces algorithms for simulating the plasmonic response of a perfectly flat sheet of graphene. They solved the Helmholtz equations by using the dispersive Drude model for the
surface current.

In this paper, we propose a novel FETD method by treating the graphene  sheet as zero-thickness.
The new method is based on a reformulated system of governing equations for graphene with electric and magnetic fields as unknowns. 
Compared to our previous work \cite{Li_CMAWA2023}, the new system does not contain the induced current explicitly. Hence, our new method is more efficient in memory and computational cost.

The rest of the paper is organized as follows. In Section 2, we first
present and reformulate the time-domain governing equations for modeling the surface plasmon polaritons
on graphene sheets. Then we prove the stability for the model. In Section 3, we propose a leapfrog 
time stepping finite element scheme for solving the graphene model. 
In Section 4, we present extensive numerical results to demonstrate the effectiveness of our method for simulating the propagation of surface plasmon polaritons on various complex graphene sheets.
 We conclude the paper in Section 5.

\section{The governing equations and stability analysis}

In our previous papers \cite{Yang_CMAME2020,Li_RINAM2021},
we treated the graphene as a homogenized material with an effective permittivity and a small  thickness.
By ignoring  the interband conductivity, we have the $TE_z$ model governing equations for simulating surface plasmon propagation on graphene \cite[(2.7)-(2.12)]{Yang_CMAME2020}:
\begin{eqnarray}
    && \eps_0\pa_t \bfE = \nabla \times H, \quad\mbox{in} ~~\O,  \label{m1} \\
    && \mu_0\pa_t H = -\nabla \times \bfE - K_s,\quad\mbox{in} ~~\O, \label{m2} \\
    && \tau_0 \pa_t \bfJ + \bfJ =\sigma_0 \bfE, \quad\mbox{on} ~\Gamma, \label{m4} 
\end{eqnarray}
where we denote the electric field  $\bfE = (E_x, E_y)'$, magnetic field $H=H_z$,
$K_s$ for an imposed magnetic source function, $\bfJ:=\bfJ_d$ (as denoted in \cite{Yang_CMAME2020}) 
for the induced intraband surface current in graphene,
 $\eps_0$ and $\mu_0$ for  the  vacuum permittivity and permeability, 
  the positive constant $\tau_0$ for the relaxation time,
     and the positive constant $\sigma_0$ for the graphene surface conductivity. 
Moreover, we assume that the physical domain $\O$ is a bounded Lipschitz polygonal domain in ${\cal R}^2$ with boundary $\pa\O$, and $\Gamma$ represents the graphene sheet embedded in $\O$. 
Also we define the 2-D curl operators  as
   $\nabla \times H:=(\pa_y H, -\pa_x H)'$ and $\nabla \times \bfE:=\pa_x E_y - \pa_y E_x$.

To complete the model,
 we assume that (\ref{m1})-(\ref{m4}) are subject to the simple perfectly  conducting (PEC) boundary condition:
\begin{equation}
\boldsymbol{\hat{\nu}} \times \bfE = 0,~~ \mbox{on} ~\partial\Omega, 
\label{PEC}
\end{equation}
and the initial conditions
\begin{eqnarray}
 \bfE(\bfx,0)=\bfE_0(\bfx), ~H(\bfx,0)=H_0(\bfx),
~\bfJ(\bfx,0)=\bfJ_{0}(\bfx),   
\label{IC_mod}
\end{eqnarray}
where $\hat{\boldsymbol{\nu}}$ denotes the unit outward normal vector on $\pa\Omega$,
 and $\bfE_0, H_0, \bfJ_{0}$ are some properly given functions.

Recently, we \cite{Li_CMAWA2023} successfully simulated surface plasmon propagation on graphene by treating it 
as zero-thickness sheet (i.e., appearing as a curve in our 2-D simulations, e.g., Figures \ref{bifurcate_straight_mesh} and \ref{bifurcate_curve_mesh} shown later),
with the following  boundary conditions given on the graphene interface  \cite[Fig.1]{Bludov_2013}:
  \begin{eqnarray}
&& \hat{n}_1\times {\bfE}_1 = \hat{n}_2\times {\bfE}_2, ~~ \mbox{on}~ \Gamma, \label{bc1} \\
&& H_1 - H_2 = \bfJ\times \boldsymbol{\hat{n}}, ~~ \mbox{on}~ \Gamma, \label{bc2} 
   \end{eqnarray}
where $H_1$ and $H_2$ represent the magnetic fields above and below the interface, respectively, 
$\boldsymbol{\hat{n}}:=(n_x,n_y)'$ denotes the unit normal vector of the interface pointing upward, and $\hat{n}_1$ and
   $\hat{n}_2$ are the unit outward normal vectors from the top and bottom
   of the interface. Finally,  we adopt the 2-D cross product notation
   $\bfJ\times \boldsymbol{\hat{n}}:=J_xn_y-J_yn_x$. We like to remark that  \eqref{bc1}-\eqref{bc2} imply that the tangential electric field is  continuous across the interface,
   and the jump of the tangential component of the magnetic field across the interface is equal to the surface current.

In \cite{Li_CMAWA2023}, using integration by parts and the interface conditions \eqref{bc1}-\eqref{bc2},
we derived  the following weak formulation for simulating graphene as an interface: 
 Find the solution 
 \begin{eqnarray*}
 \bfE\in L^2(0,T; H_0(curl;\O))\cap  H^1(0,T; (L^2(\O))^2), 
 H \in H^1(0,T; L^2(\O)),
 \bfJ \in   H^1(0,T; (L^2(\Gamma))^2),
 \label{ass1}
\end{eqnarray*}
 such that (cf. \cite[(2.9)-(2.11)]{Li_CMAWA2023}):
\begin{eqnarray}
&& \eps_0 (\pa_t \bfE, \bfphi) = ( H, \nabla\times \bfphi) - \langle\bfJ, \bfphi\rangle_{\Gamma}, \label{m5} \\
&& \mu_0 (\pa_t H, \psi) = -(\nabla\times \bfE, \psi) -  (K_s, \psi), \label{m6} \\
&& \langle\tau_0 \pa_t \bfJ, \bfchi\rangle_{\Gamma} +  \langle\bfJ, \bfchi\rangle_{\Gamma}= \langle \sigma_0\bfE, \bfchi\rangle_{\Gamma}, \label{m7}
   \end{eqnarray}
hold true for any test functions $\bfphi \in H_0(curl;\Omega), \psi \in L^2(\Omega)$,
and $\bfchi \in (L^2(\Gamma))^2$. To obtain (\ref{m5}), we use integration by parts over $\O$ and  the boundary condition (\ref{bc2}).
Here  we denote $(\cdot,\cdot)$ for the $L^2$ inner product over
$\O$, and $\langle\bfJ,
\bfphi\rangle_{\Gamma}:=\int_{\Gamma}\bfJ\times \boldsymbol{\hat{n}} \cdot
\bfphi\times \boldsymbol{\hat{n}}~ds$ for the inner product on $\Gamma$.
Finally, we adopt  the standard Sobolev space notation 
$$H_0(curl;\O)=\{ \bfu\in (L^2(\O))^2:~ \nabla\times \bfu \in
L^2(\O), ~\boldsymbol{\hat{\nu}} \times \bfu = 0~ \mbox{on} ~\pa\O  \}.$$

To derive the governing equations for our new numerical method, we differentiate \eqref{m5} with respect to $t$ and then multiplying the result by $\tau_0 $. This gives
\begin{eqnarray}
&& \tau_0 \eps_0 (\pa_{tt} \bfE, \bfphi) = \tau_0 ( \pa_tH, \nabla\times \bfphi) - \tau_0 \langle\pa_t\bfJ, \bfphi\rangle_{\Gamma}. \label{p1} 
   \end{eqnarray}

Adding \eqref{p1} and \eqref{m5} together, and using \eqref{m7} with $\bfchi=\bfphi$, we have
\begin{eqnarray}
&& \tau_0 \eps_0 (\pa_{tt} \bfE, \bfphi) +  \eps_0 (\pa_{t} \bfE, \bfphi) 
= \tau_0 ( \pa_tH, \nabla\times \bfphi) + (H, \nabla\times \bfphi)  - \sigma_0 \langle\bfE, \bfphi\rangle_{\Gamma}. \label{p3} 
   \end{eqnarray}

To develop a more efficient numerical method later, we replace $\pa_t H$ in \eqref{p3} by \eqref{m6} with $\psi = \nabla \times \bfphi$ and obtain the following new weak formulation:  Find the solution 
 $\bfE\in L^2(0,T; H_0(curl;\O))\cap  H^2(0,T; (L^2(\O))^2)$, 
 $H \in H^1(0,T; L^2(\O))$,
 such that 
\begin{eqnarray}
&&  \tau_0 \eps_0 (\pa_{tt} \bfE, \bfphi) +  \eps_0 (\pa_{t} \bfE, \bfphi)
+  \frac{\tau_0}{\mu_0} (\nabla\times \bfE, \nabla\times \bfphi)  \nonumber \\
&&\quad\quad =  (H, \nabla\times \bfphi) -  \frac{\tau_0}{\mu_0} (K_s, \nabla\times \bfphi)  - \sigma_0 \langle\bfE, \bfphi\rangle_{\Gamma}, \label{p5} \\
&& \mu_0 (\pa_t H, \psi) = -(\nabla\times \bfE, \psi) -  (K_s, \psi), \label{p6} 
   \end{eqnarray}
hold true for any test functions $\bfphi \in H_0(curl;\Omega)$ and $\psi \in L^2(\Omega)$.
The initial conditions for the problem \eqref{p5}-\eqref{p6} are as follows:
\begin{eqnarray}
 \bfE(\bfx,0)=\bfE_0(\bfx), ~H(\bfx,0)=H_0(\bfx),
~\pa_t\bfE(\bfx,0)=\eps_0^{-1}  \nabla \times H_0(\bfx).   
\label{IC2}
\end{eqnarray}

To simplify the notation,  we denote the $L^2$ norm of $u$ in $\O$
as $||u||:=||u||_{L^2(\O)}$, and the $L^2$ norm of $\bfu$ on $\Gamma$
as $||\bfu||_{\Gamma}:=(\int_{\Gamma}|\bfu\times \boldsymbol{\hat{n}}|^2~ds)^{1/2}$.

\begin{theorem} 
Denote the energy
\begin{eqnarray}
 ENG(t)  &=&  \tau_0\eps_0\|\pa_t\bfE\|^2  +\sigma_0\|\bfE\|^2_{\Gamma}
+ \mu_0\|\tau_0^{1/2}\pa_tH + \tau_0^{-1/2}H\|^2 + \frac{\mu_0}{\tau_0}\|H\|^2   \nonumber \\
&& \quad  + \frac{\eps_0}{\tau_0}\|\bfE\|^2  
+\frac{1}{\sigma_0}\|\bfJ\|^2_{\Gamma} + \tau_0\mu_0\|\pa_tH\|^2 
+\frac{\tau_0^2}{\sigma_0}\|\pa_t\bfJ\|^2_{\Gamma}.
\label{energy_cont}
\end{eqnarray} 
Then we have the following continuous stability:
\begin{eqnarray}
ENG(t) \leq \left[ ENG(0) + \int_0^t (\frac{\tau_0}{\mu_0}\|\pa_{t}K_s\|^2 +  \frac{1}{\tau_0\mu_0}\|K_s\|^2)dt \right]\cdot \exp(C_*t), \quad \forall~t\in [0,T],
\label{stab}
\end{eqnarray}
where the constant $C_*>0$ depends only on the parameter $\tau_0$.
\label{thm2}
\end{theorem}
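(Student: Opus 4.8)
The plan is to prove \eqref{stab} by the energy method: I will test the weak equations \eqref{p5}, \eqref{p6}, \eqref{m5}, \eqref{m7} and their time-differentiated versions against the fields and their time derivatives, choosing the multipliers so that the weighted sum of the resulting identities reproduces exactly $\tfrac12\tfrac{d}{dt}ENG(t)$ on the left, together with a nonnegative dissipation that is discarded and a right-hand side built only from the $K_s$-source plus quantities controllable by $ENG(t)$. First I would differentiate \eqref{p5}, \eqref{p6}, \eqref{m5} and \eqref{m7} in time (the assumed regularity justifies this) so that all of $\bfE,H,\bfJ$ and $\pa_t\bfE,\pa_t H,\pa_t\bfJ$ are available as test functions. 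Testing \eqref{p5} with $\pa_t\bfE$ produces $\tfrac{d}{dt}$ of $\tfrac{\tau_0\eps_0}{2}\|\pa_t\bfE\|^2$, $\tfrac{\tau_0}{2\mu_0}\|\nabla\times\bfE\|^2$ and $\tfrac{\sigma_0}{2}\|\bfE\|^2_\Gamma$ (the last from $-\sigma_0\langle\bfE,\pa_t\bfE\rangle_\Gamma$) plus the nonnegative dissipation $\eps_0\|\pa_t\bfE\|^2$; the leftover right-hand side terms $(H,\nabla\times\pa_t\bfE)$ and $\tfrac{\tau_0}{\mu_0}(K_s,\nabla\times\pa_t\bfE)$ are handled by the cancellation and source steps below.

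The decisive structural points are three. (i) The Maxwell curl cross-terms cancel in conjugate pairs: the $(H,\nabla\times\pa_t\bfE)$ created by testing \eqref{p5} with $\pa_t\bfE$ is annihilated by the copy arising from the $\pa_{tt}H$-contribution of $ENG$ after substituting the time-differentiated \eqref{p6}, exactly as $(\nabla\times\bfE,H)$ cancels between the $\bfE$- and $H$-equations in the classical Maxwell energy identity. (ii) The surface terms telescope: using \eqref{m7} with $\bfchi=\tfrac{1}{\tau_0\sigma_0}\bfJ$ converts $\langle\bfJ,\bfE\rangle_\Gamma$ into $\tfrac{d}{dt}\big(\tfrac{1}{2\sigma_0}\|\bfJ\|^2_\Gamma\big)$ plus the nonnegative $\tfrac{1}{\tau_0\sigma_0}\|\bfJ\|^2_\Gamma$, while the time-differentiated \eqref{m7} tested with $\tfrac{\tau_0}{\sigma_0}\pa_t\bfJ$ yields $\tfrac{d}{dt}\big(\tfrac{\tau_0^2}{2\sigma_0}\|\pa_t\bfJ\|^2_\Gamma\big)$ and the nonnegative $\tfrac{\tau_0}{\sigma_0}\|\pa_t\bfJ\|^2_\Gamma$. (iii) The curl expression $\tfrac{\tau_0}{\mu_0}\|\nabla\times\bfE\|^2$ is not discarded but rewritten through \eqref{p6} as $\tfrac{\tau_0}{\mu_0}\|\mu_0\pa_t H+K_s\|^2$; combined with the bare $H$-terms from testing \eqref{m6} with suitable multiples of $H$ and $\pa_t H$, this assembles the perfect square $\mu_0\|\tau_0^{1/2}\pa_t H+\tau_0^{-1/2}H\|^2$ together with the residual $\tfrac{\mu_0}{\tau_0}\|H\|^2$ and $\tau_0\mu_0\|\pa_t H\|^2$ of the energy, the cross-products matching precisely. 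Testing \eqref{m5} with $\tfrac{1}{\tau_0}\bfE$ supplies the last term $\tfrac{\eps_0}{\tau_0}\|\bfE\|^2$.

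Summing the weighted identities gives $\tfrac12\tfrac{d}{dt}ENG(t)+(\text{nonnegative dissipation})=R(t)$, where $R(t)$ collects the $K_s$-sources $-\tfrac{1}{\tau_0}(K_s,H)$, $-\tau_0(\pa_t K_s,\pa_t H)$, $-(\pa_t K_s,H)$ and one sign-indefinite remainder proportional to $\mu_0\|\pa_t H\|^2$ that survives the $\pa_{tt}H$-substitution. Discarding the dissipation, I would estimate each source by Young's inequality with weights chosen so that the $\|\pa_t K_s\|^2$ and $\|K_s\|^2$ contributions add up to exactly $\tfrac{\tau_0}{\mu_0}\|\pa_t K_s\|^2+\tfrac{1}{\tau_0\mu_0}\|K_s\|^2$, while the absorbed field parts are multiples of the energy terms $\tfrac{\mu_0}{\tau_0}\|H\|^2$ and $\tau_0\mu_0\|\pa_t H\|^2$ with coefficients depending only on $\tau_0$ (the $\mu_0,\eps_0,\sigma_0$ cancel because the energy weights were tuned to match). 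The remainder obeys $\mu_0\|\pa_t H\|^2=\tfrac{1}{\tau_0}\big(\tau_0\mu_0\|\pa_t H\|^2\big)\le\tfrac{1}{\tau_0}ENG(t)$ and is controlled likewise. This yields $\tfrac{d}{dt}ENG(t)\le C_*ENG(t)+\tfrac{\tau_0}{\mu_0}\|\pa_t K_s\|^2+\tfrac{1}{\tau_0\mu_0}\|K_s\|^2$ with $C_*=C_*(\tau_0)$, and Gronwall's inequality delivers \eqref{stab}.

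I expect the main obstacle to lie in the bookkeeping of point (iii): selecting the multipliers and scalings so that the higher-order curl terms are absorbed through \eqref{p6} and the three $H$-contributions fuse into the stated perfect square $\mu_0\|\tau_0^{1/2}\pa_t H+\tau_0^{-1/2}H\|^2$ rather than some neighbouring quadratic form, with no uncontrolled term left behind. The pairwise cancellation in (i) and the telescoping in (ii) are routine, but verifying that the weighted sum equals precisely $\tfrac12\tfrac{d}{dt}ENG(t)$ — and in particular that the only surviving indefinite term is dominated by $\tfrac{1}{\tau_0}ENG(t)$, so that $C_*$ depends on $\tau_0$ alone — is where the care is concentrated.
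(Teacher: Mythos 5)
Your plan is essentially the paper's own proof: test \eqref{p5} with $\pa_t\bfE$, use the (time-differentiated) magnetic equation to convert the curl cross-terms and complete the square $\mu_0\|\tau_0^{1/2}\pa_tH+\tau_0^{-1/2}H\|^2$, bring in the first-order system \eqref{m5}--\eqref{m7} and its time derivative with weighted test functions to supply the remaining energy pieces and telescope the surface terms, and finally absorb the surviving indefinite $\mu_0\|\pa_tH\|^2$ and the $K_s$-sources via Young and Gronwall. Up to harmless rescalings of the multipliers (the paper takes $\bfchi=\tfrac{1}{\sigma_0}\bfJ$ and multiplies the summed identity by $\tfrac{2}{\tau_0}$, which is exactly what leaves the $+\tfrac{\mu_0}{\tau_0}\|H\|^2$ residual after cancelling the deficit created by the completed square), your outline reproduces the paper's Parts (I)--(IV) step for step.
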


\begin{proof} To make our proof easy to follow, we split it into several major parts.

(I)~ Choosing $\bfphi=\pa_t\bfE$ in  
\eqref{p5},  we have
\begin{eqnarray}
&& \frac{1}{2} \frac{d}{dt} \left( \tau_0\eps_0\|\pa_t\bfE\|^2 + \frac{\tau_0}{\mu_0}\|\nabla\times \bfE\|^2 
+\sigma_0\|\bfE\|^2_{\Gamma} \right)
+ \eps_0\|\pa_t\bfE\|^2 \nonumber \\
&&\quad\quad = (H,\nabla\times \pa_t\bfE) - \frac{\tau_0}{\mu_0}(K_s,\nabla\times \pa_t\bfE).
\label{p8}
\end{eqnarray}

Taking the time derivative of \eqref{m2}, we obtain
\begin{eqnarray}
\nabla \times \pa_t\bfE =  -  \mu_0\pa_{tt} H  - \pa_t K_s. \label{p9} 
\end{eqnarray}

Replacing $\nabla \times \pa_t\bfE$ in \eqref{p8} by \eqref{p9}, we have
\begin{eqnarray}
&& \frac{1}{2} \frac{d}{dt} \left( \tau_0\eps_0\|\pa_t\bfE\|^2 + \frac{\tau_0}{\mu_0}\|\nabla\times \bfE\|^2 
+\sigma_0\|\bfE\|^2_{\Gamma} \right)
+ \eps_0\|\pa_t\bfE\|^2 \nonumber \\
&&\quad\quad = -\mu_0(H,\pa_{tt}H) - (H,\pa_{t}K_s)
+ \tau_0(K_s,\pa_{tt}H) + \frac{\tau_0}{\mu_0}(K_s, \pa_tK_s).
\label{p10}
\end{eqnarray}

(II)~ By choosing $\psi=\tau_0\pa_{tt}H$ in   \eqref{p6},  then 
replacing $\pa_{tt}H$ by \eqref{p9}, we have
\begin{eqnarray}
 \frac{1}{2}\frac{d}{dt} \left( \tau_0 \mu_0\|\pa_tH\|^2 \right)
&=& -\tau_0(\nabla\times \bfE, \pa_{tt}H) - \tau_0 (K_s,\pa_{tt}H) \nonumber \\
&=& \frac{\tau_0}{\mu_0}(\nabla\times \bfE, \nabla\times \pa_t\bfE + \pa_{t}K_s) - \tau_0 (K_s,\pa_{tt}H) \nonumber \\
&=& \frac{\tau_0}{2\mu_0}\frac{d}{dt} (||\nabla\times \bfE||^2) 
+  \frac{\tau_0}{\mu_0}( \nabla\times \bfE, \pa_{t}K_s) - \tau_0 (K_s,\pa_{tt}H).
\label{p11}
\end{eqnarray}

Adding \eqref{p10} and \eqref{p11} together, and using \eqref{m2}, we obtain
\begin{eqnarray}
&& \frac{1}{2} \frac{d}{dt} \left( \tau_0\eps_0\|\pa_t\bfE\|^2 + \tau_0\mu_0\|\pa_tH\|^2 
+\sigma_0\|\bfE\|^2_{\Gamma} \right)
+ \eps_0\|\pa_t\bfE\|^2 \nonumber \\
&=& -\mu_0(H,\pa_{tt}H) - (H,\pa_{t}K_s) - \tau_0(\pa_{t}H, \pa_tK_s)   \nonumber \\
&=& -\mu_0\left[ \frac{d}{dt}(H,\pa_{t}H) - (\pa_{t}H,\pa_{t}H)\right]
- (H,\pa_{t}K_s) - \tau_0(\pa_{t}H, \pa_tK_s),
\label{p12}
\end{eqnarray}
which can be rewritten as
\begin{eqnarray}
&& \frac{1}{2} \frac{d}{dt} \left( \tau_0\eps_0\|\pa_t\bfE\|^2 +\sigma_0\|\bfE\|^2_{\Gamma}
+ \mu_0\|\tau_0^{1/2}\pa_tH + \tau_0^{-1/2}H\|^2 - \frac{\mu_0}{\tau_0}\|H\|^2 
\right)
+ \eps_0\|\pa_t\bfE\|^2 \nonumber \\
&=& \mu_0||\pa_{t}H||^2 - (H,\pa_{t}K_s) - \tau_0(\pa_{t}H, \pa_tK_s).
\label{p13}
\end{eqnarray}

(III)~To bound the terms $H$ and $\pa_{t}H$ on the right-hand side (RHS) of \eqref{p13},
we first choose $\bfphi=\bfE, \psi=H, \bfchi=\frac{1}{\sigma_0}\bfJ$ in  
(\ref{m5})-(\ref{m7}), respectively. Adding the results together, we obtain
\begin{eqnarray}
\frac{1}{2} \frac{d}{dt} \left( \eps_0\|\bfE\|^2 + \mu_0\|H\|^2 
+\frac{\tau_0}{\sigma_0}\|\bfJ\|^2_{\Gamma} \right)
+ \frac{1}{\sigma_0}\|\bfJ\|^2_{\Gamma} = -(K_s,H).
\label{p14}
\end{eqnarray} 

Multiplying \eqref{p14} by $\frac{2}{\tau_0}$, we have
\begin{eqnarray}
\frac{1}{2} \frac{d}{dt} \left( \frac{2\eps_0}{\tau_0}\|\bfE\|^2 + \frac{2\mu_0}{\tau_0}\|H\|^2 
+\frac{2}{\sigma_0}\|\bfJ\|^2_{\Gamma} \right)
+ \frac{2}{\tau_0\sigma_0}\|\bfJ\|^2_{\Gamma} = -\frac{2}{\tau_0}(K_s,H).
\label{p15}
\end{eqnarray} 

Similarly, taking the time derivative of (\ref{m5})-(\ref{m7}), then choosing $\bfphi=\pa_t\bfE, \psi=\pa_tH, \bfchi=\frac{1}{\sigma_0}\pa_t\bfJ$ respectively, and
 adding the results together, we obtain
\begin{eqnarray}
\frac{1}{2} \frac{d}{dt} \left( \eps_0\|\pa_t\bfE\|^2 + \mu_0\|\pa_tH\|^2 
+\frac{\tau_0}{\sigma_0}\|\pa_t\bfJ\|^2_{\Gamma} \right)
+ \frac{1}{\sigma_0}\|\pa_t\bfJ\|^2_{\Gamma} = -(\pa_tK_s,\pa_tH).
\label{p16}
\end{eqnarray} 

Multiplying \eqref{p16} by $\tau_0$, we have
\begin{eqnarray}
\frac{1}{2} \frac{d}{dt} \left( \tau_0\eps_0\|\pa_t\bfE\|^2 + \tau_0\mu_0\|\pa_tH\|^2 
+\frac{\tau_0^2}{\sigma_0}\|\pa_t\bfJ\|^2_{\Gamma} \right)
+ \frac{\tau_0}{\sigma_0}\|\pa_t\bfJ\|^2_{\Gamma} = -\tau_0(\pa_tK_s,\pa_tH).
\label{p17}
\end{eqnarray} 

Now adding \eqref{p13}, \eqref{p15}, and \eqref{p17} together, we have
\begin{eqnarray}
&& \frac{1}{2} \frac{d}{dt} \left(2 \tau_0\eps_0\|\pa_t\bfE\|^2  +\sigma_0\|\bfE\|^2_{\Gamma}
+ \mu_0\|\tau_0^{1/2}\pa_tH + \tau_0^{-1/2}H\|^2 + \frac{\mu_0}{\tau_0}\|H\|^2  \right. \nonumber \\
&& \quad \left. + \frac{2\eps_0}{\tau_0}\|\bfE\|^2  
+\frac{2}{\sigma_0}\|\bfJ\|^2_{\Gamma} + \tau_0\mu_0\|\pa_tH\|^2 
+\frac{\tau_0^2}{\sigma_0}\|\pa_t\bfJ\|^2_{\Gamma} \right)  \nonumber \\
&& \quad + \eps_0\|\pa_t\bfE\|^2 + \frac{2}{\tau_0\sigma_0}\|\bfJ\|^2_{\Gamma}
+ \frac{\tau_0}{\sigma_0}\|\pa_t\bfJ\|^2_{\Gamma} \nonumber \\
&=&  \mu_0||\pa_{t}H||^2 - (H,\pa_{t}K_s) - 2\tau_0(\pa_{t}H, \pa_tK_s) 
-\frac{2}{\tau_0}(K_s,H).
\label{p18}
\end{eqnarray} 

(IV)~ Using the Young's inequality, we can bound the last three RHS terms of \eqref{p18} as follows:
\begin{eqnarray}
&&  - (H,\pa_{t}K_s) \leq   \frac{\mu_0}{2\tau_0}\|H\|^2 +  \frac{\tau_0}{2\mu_0}\|\pa_{t}K_s\|^2,  \label{p19}\\
&& - 2\tau_0(\pa_{t}H, \pa_tK_s) \leq 2\tau_0\mu_0\|\pa_tH\|^2  +  \frac{\tau_0}{2\mu_0}\|\pa_{t}K_s\|^2,  \label{p20}\\
&& -\frac{2}{\tau_0}(K_s,H) \leq  \frac{\mu_0}{2\tau_0}\|H\|^2 +  \frac{1}{2\tau_0\mu_0}\|K_s\|^2.
\label{p21}
\end{eqnarray} 

Substituting the above estimates \eqref{p19}-\eqref{p21} into \eqref{p18}, and dropping the last three non-negative terms on the left hand side of \eqref{p18}, we obtain
\begin{eqnarray}
&& \frac{1}{2} \frac{d}{dt} \left(2 \tau_0\eps_0\|\pa_t\bfE\|^2  +\sigma_0\|\bfE\|^2_{\Gamma}
+ \mu_0\|\tau_0^{1/2}\pa_tH + \tau_0^{-1/2}H\|^2 + \frac{\mu_0}{\tau_0}\|H\|^2  \right. \nonumber \\
&& \quad \left. + \frac{2\eps_0}{\tau_0}\|\bfE\|^2  
+\frac{2}{\sigma_0}\|\bfJ\|^2_{\Gamma} + \tau_0\mu_0\|\pa_tH\|^2 
+\frac{\tau_0^2}{\sigma_0}\|\pa_t\bfJ\|^2_{\Gamma} \right)  \nonumber \\
&\leq&  \tau_0\mu_0(2+ \tau_0^{-1})||\pa_{t}H||^2 + \frac{\mu_0}{\tau_0}\|H\|^2 
+  \frac{\tau_0}{\mu_0}\|\pa_{t}K_s\|^2 +  \frac{1}{2\tau_0\mu_0}\|K_s\|^2.
\label{p23}
\end{eqnarray} 

The proof is complete by applying the Gronwall inequality to \eqref{p23}.
\end{proof}

 \section{The leapfrog finite element scheme and its analysis}

   To design a finite element method, we partition the physical domain 
$\Omega$ with $\Gamma$ as an internal boundary 
by a  shape regular  triangular mesh $\mathcal {T}_h$ with maximum
mesh size $h$. Without loss of generality, we 
consider the following 
Raviart-Thomas-N\'{e}d\'{e}lec (RTN)'s mixed spaces $U_h$ and
$\bfV_h$ on triangular elements \cite{Libook,Monk}: For any $r\geq 1$,
\begin{align*}
         & U_h = \{u_h \in L^2(\O):~ u_h|_K \in p_{r-1}, \forall K \in T_h\},\\
         & \bfV_h = \{\bfv_h \in H(curl;\O):~ \bfv_h|_K \in (p_{r-1})^2 \oplus S_r, \forall K \in T_h\}, ~~ S_r= \{\bfp \in \Tilde{p}_r^2, \bfx \cdot \bfp=0 \},
 \end{align*}
where $p_r$ denotes  the space of polynomials
of degree less than or equal to $r$, and $\Tilde{p}_r^2$ represents  the space of homogeneous polynomials of degree $r$.

 To handle the PEC boundary condition (\ref{PEC}), we introduce the subspace
$$\bfV_h^0=\{\bfv_h \in \bfV_h:~~ \boldsymbol{\hat{\nu}}\times \bfv_h
=0 \quad\mbox{on}~\partial\Omega \}.$$

To construct the fully discrete finite element scheme, we partition the time interval $[0,T]$ uniformly by points $t_i=i\tau, i=0,...,N_t$, where $\tau=\frac{T}{N_t}$ denotes the time step size.

 Now we can construct the following leapfrog time stepping scheme:
 Given proper initial approximations of $\bfE_h^{-1}, \bfE_h^0 \in \bfV_h$, $H_h^{\frac{1}{2}}\in U_h$, for any $n \ge 0$, find  $\bfE_{h}^{n+1} \in \bfV_h$, $H_h^{n+\frac{1}{2}}\in U_h$ such that
 \begin{eqnarray}
 && \eps_0 (\delta_{\tau}^2\bfE_h^{n}, \bfphi_h) 
 + \frac{\eps_0}{\tau_0}(\delta_{2\tau}\bfE_h^{n}, \bfphi_h) +\frac{1}{\mu_0} (\nabla \times \bfE^n_h,\nabla \times \bfphi_h) \nonumber \\
&&  \hskip 1in = \frac{1}{\tau_0}(\ov{H}^{n}_{h}, \nabla\times \bfphi_h) - \frac{\sigma_0}{\tau_0} \langle\bfE_h^n, \bfphi_h\rangle_{\Gamma} - \frac{1}{\mu_0}(K_s^n, \nabla \times \bfphi_h), \label{sc1} \\
 && \mu_0 (\delta_{\tau}H^{n}_{h}, \psi_h) =   - (\nabla \times \bfE_{h}^{n}, \psi_h) -(K_{s}^{n}, \psi_h), \label{sc2} 
\end{eqnarray}
hold true  for any test functions $\bfphi_h \in \bfV_h^0, \psi_h \in U_h$.
Here we adopt the following central difference operators and averaging operator  in time: For any time sequence function $u^n$,
$$ 
\delta_{\tau}u^{n}=\frac{u^{n+\frac{1}{2}}-u^{n-\frac{1}{2}}}{\tau},
~ \delta_{2\tau}u^{n}=\frac{u^{n+1}-u^{n-1}}{2\tau},
~ \delta_{\tau}^2u^{n}=\frac{u^{n+1}-2u^{n}+u^{n-1}}{\tau^2},
~ 
 \ov{u}^{n}=\frac{u^{n+\frac{1}{2}}+u^{n-\frac{1}{2}}}{2}.
$$

Corresponding to  the finite element spaces $\bfV_h$ and $U_h$,
 we denote 
 $\Pi_c$ and $\Pi_2$ for the standard N\'{e}d\'{e}lec interpolation
 in space $\bfV_h$ and the standard $L^2$ projection onto
 space $U_h$, respectively. 
 Furthermore,  the following interpolation and projection errors hold true (cf. \cite{Libook,Monk}):
 \begin{eqnarray}
&& ||\bfu-\Pi_c\bfu||_{H(curl;\O)}\leq ch^r||\bfu||_{H^r(curl;\O)},
~~\forall~\bfu\in H^r(curl;\O),\quad r\geq1,\label{Int}  \\
&& ||u-\Pi_2u||_{L^2(\O)}\leq ch^r||u||_{H^r(\O)},
~~\forall~u\in H^r(\O),\quad r\geq0,\label{Pro}
\end{eqnarray}
where $||u||_{H^r(\O)}$ denotes the norm for the Sobolev space $H^r(\O)$, and 
$||\bfu||_{H^r(curl;\O)}:=(||\bfu||^2_{(H^r(\O))^2}+||\nabla\times \bfu||^2_{H^r(\O)})^{1/2}$ is the norm for the Sobolev space 
$$H^r(curl;\O)=\{\bfu\in (H^r(\O))^2: ~\nabla\times \bfu \in H^r(\O)\}.$$

 The initial conditions  (\ref{IC2}) are discretized as follows:
\begin{align}
  &\bfE_h^{0}=\Pi_c\bfE_0(\bfx),  \label{lf_in0}  \\
  &H_h^{-\frac{1}{2}}=\Pi_2\left[H(\cdot,0) -\frac{\tau}{2}\pa_tH(\cdot,0)\right]
= \Pi_2\left[ H_0(\bfx)+\frac{\tau}{2\mu_0}(\nabla\times \bfE_0(\bfx)+K_s(\bfx,0)) \right],    \label{lf_in1}  \\
&\frac{\bfE_h^{1} - \bfE_h^{-1}}{2\tau}=\Pi_c\left(\eps_0^{-1}\nabla \times H_0(\bfx)\right),
\label{lf_in2}
\end{align}
where we used the Taylor expansion and the governing equation (\ref{m2}).

The implementation of the scheme \eqref{sc1}-\eqref{sc2} is quite simple. At each time step, we first solve \eqref{sc2} for $H_{h}^{n+\frac{1}{2}}$; then solve \eqref{sc1} for $\bfE_{h}^{n+1}$.
Of course, at the first time step  when $n=0$, we need to use the initial conditions \eqref{lf_in0}-\eqref{lf_in2}.
  It can be seen that the coefficient matrix at each time step is symmetric and positive definite. Hence, the solvability of our scheme \eqref{sc1}-\eqref{sc2} is guaranteed. 

\subsection{The stability analysis}
To prove the discrete stability for our scheme \eqref{sc1}-\eqref{sc2},
we denote $C_v=\frac{1}{\sqrt{\epsilon_0\mu_0}}$ for the wave propagation speed,
and recall the standard inverse estimate: 
\begin{align}
 ||\nabla\times \bfphi_h|| \leq C_{\mathrm{in}}h^{-1}||\bfphi_h||, \quad
\forall~\bfphi_h \in \bfV_h,  
\label{inv}
\end{align}
and the trace inequality:
\begin{align}
 ||\bfphi_h||_{\Gamma} \leq C_{\mathrm{tr}}h^{-1/2}||\bfphi_h||, 
 \quad  \forall~\bfphi_h \in \bfV_h,  
\label{trace}
\end{align}
where the positive constants $C_{\mathrm{in}}$ and $C_{\mathrm{tr}}$ are independent of the mesh size $h$.

\begin{theorem}
\label{thm2}
Denote the discrete energy  for the scheme \eqref{sc1}-\eqref{sc2}:
\begin{eqnarray}
ENG_{m} &:=& \eps_0||\delta_{\tau}\bfE_h^{m+\frac{1}{2}}||^2 
+ \frac{1}{2\mu_0}(||\nabla \times \bfE^{m+1}_h||^2 + ||\nabla \times \bfE^{m}_h||^2) 
+ \mu_0||H_h^{m+\frac{1}{2}}||^2    \nonumber \\
&&   + \frac{\sigma_0}{2\tau_0}(||\bfE_{h}^{m+1}||^2_{\Gamma}+||\bfE_{h}^{m}||^2_{\Gamma}) 
+ \frac{\tau}{4\mu_0\tau_0}(||\nabla \times \bfE^{m+1}_h||^2 + ||\nabla \times \bfE^{m}_h||^2). 
\label{ENG_lf}
\end{eqnarray}
Then under the time step constraint:
\begin{eqnarray}
\tau \leq \min\left(1,   \frac{h}{2C_{\mathrm{in}}C_v},  \frac{h\sqrt{\eps_0\tau_0}}{2C_{\mathrm{tr}}\sqrt{\sigma_0}},   \frac{h\sqrt{\tau_0}}{\sqrt{2}C_{\mathrm{in}}C_v},   \frac{h\tau_0}{C_{\mathrm{in}}C_v}\right),
\label{CFL}
\end{eqnarray}
we have the following stability: For any $m \in [1, N_t-1]$,
\begin{eqnarray}
ENG_{m} \leq C_{*}\left[ENG_{0} + ||K_s^{0}||^2+||K_s^{1}||^2+\tau\sum_{n=1}^{m}
(||K_s^{n}||^2 +  ||\delta_{2\tau}K_s^{n}||^2)\right],
 \label{stab_lf}
\end{eqnarray}
where the positive constant $C_{*}$ is independent of $\tau$ and $h$.
\end{theorem}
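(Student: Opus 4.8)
The plan is to reproduce the continuous energy argument of Theorem~2.1 at the fully discrete level, replacing the test function $\bfphi=\pa_t\bfE$ by the centered difference $\delta_{2\tau}\bfE_h^n$ and replacing integration in $t$ by summation over the time index together with summation by parts in time.

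First I would test \eqref{sc1} with the centered difference $\bfphi_h=\delta_{2\tau}\bfE_h^n=\frac{\bfE_h^{n+1}-\bfE_h^{n-1}}{2\tau}$ (which lies in $\bfV_h^0$). The inertial term telescopes,
\[
\eps_0(\delta_{\tau}^2\bfE_h^n,\delta_{2\tau}\bfE_h^n)=\frac{\eps_0}{2\tau}\big(\|\delta_\tau\bfE_h^{n+\frac12}\|^2-\|\delta_\tau\bfE_h^{n-\frac12}\|^2\big),
\]
the damping term $\frac{\eps_0}{\tau_0}(\delta_{2\tau}\bfE_h^n,\delta_{2\tau}\bfE_h^n)=\frac{\eps_0}{\tau_0}\|\delta_{2\tau}\bfE_h^n\|^2\ge0$ is kept on the left, and the curl-curl and interface terms produce, after summing in $n$, the indefinite cross products $\frac{1}{2\tau\mu_0}(\nabla\times\bfE_h^m,\nabla\times\bfE_h^{m+1})$ and $-\frac{\sigma_0}{2\tau\tau_0}\langle\bfE_h^m,\bfE_h^{m+1}\rangle_\Gamma$ (plus data at $n=0$). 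These are the would-be potential and surface energies, but in leapfrog cross form. The coupling term $\frac{1}{\tau_0}(\ov H_h^n,\nabla\times\delta_{2\tau}\bfE_h^n)$ and the $K_s$ term remain on the right.

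Next I would generate the magnetic energy from \eqref{sc2}. Since $\nabla\times\bfV_h^0\subseteq U_h$, the discrete field $\nabla\times\delta_{2\tau}\bfE_h^n$ is an admissible test function in \eqref{sc2}; this is the discrete counterpart of the substitutions $\psi=\nabla\times\bfphi$ and \eqref{p9} used in the continuous proof. Concretely I would rewrite $\nabla\times\delta_{2\tau}\bfE_h^n=\delta_{2\tau}(\nabla\times\bfE_h^n)$ and eliminate $\nabla\times\bfE_h^n=-\mu_0\delta_\tau H_h^n-K_s^n$ from \eqref{sc2}, and in parallel test \eqref{sc2} with a suitable multiple of $\ov H_h^n$ so that $\frac{\mu_0}{2\tau}(\|H_h^{n+\frac12}\|^2-\|H_h^{n-\frac12}\|^2)$ telescopes into the $\mu_0\|H_h^{m+\frac12}\|^2$ term of \eqref{ENG_lf}. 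Summation by parts in time then lets the two coupling contributions combine; this is the step that also produces the extra $\frac{\tau}{4\mu_0\tau_0}(\|\nabla\times\bfE_h^{m+1}\|^2+\|\nabla\times\bfE_h^m\|^2)$ terms and routes the leftover forcing onto $\|K_s^n\|$, $\|\delta_{2\tau}K_s^n\|$ and the boundary data $\|K_s^0\|,\|K_s^1\|$ appearing in \eqref{stab_lf}. I expect correctly matching these two sets of cross terms, and extracting the precise $O(\tau)$ residual, to be the most delicate bookkeeping.

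The decisive quantitative step is coercivity of the telescoped energy. Using the identity $(a,b)=\frac12\|a\|^2+\frac12\|b\|^2-\frac12\|a-b\|^2$ I would rewrite
\[
(\nabla\times\bfE_h^m,\nabla\times\bfE_h^{m+1})=\tfrac12\|\nabla\times\bfE_h^m\|^2+\tfrac12\|\nabla\times\bfE_h^{m+1}\|^2-\tfrac{\tau^2}{2}\|\nabla\times\delta_\tau\bfE_h^{m+\frac12}\|^2,
\]
and analogously for $\langle\bfE_h^m,\bfE_h^{m+1}\rangle_\Gamma$. The curl correction is controlled by the inverse estimate \eqref{inv} through $\frac{\tau^2}{2}\|\nabla\times\delta_\tau\bfE_h^{m+\frac12}\|^2\le\frac{\tau^2C_{\mathrm{in}}^2}{2h^2}\|\delta_\tau\bfE_h^{m+\frac12}\|^2$, and the surface correction by the trace inequality \eqref{trace} through the analogous bound $\frac{\tau^2C_{\mathrm{tr}}^2}{2h}\|\delta_\tau\bfE_h^{m+\frac12}\|^2$; with $C_v=1/\sqrt{\eps_0\mu_0}$, the constraint \eqref{CFL} is exactly what is needed so that both corrections are absorbed into the positive kinetic term $\eps_0\|\delta_\tau\bfE_h^{m+\frac12}\|^2$, leaving a coercive lower bound by $ENG_m$. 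This absorption — deciding which factor in the $\min$ of \eqref{CFL} controls which correction while retaining a strictly positive residual — is the main obstacle. Finally I would bound each $K_s$-term by Young's inequality and close with the discrete Gronwall inequality applied to the summed estimate, which yields \eqref{stab_lf}.
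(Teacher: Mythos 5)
Your plan follows the same skeleton as the paper's proof: test \eqref{sc1} with the centered difference $\delta_{2\tau}\bfE_h^{n}$ (the paper uses $\tau\delta_{2\tau}\bfE_h^{n}$), telescope via the identities \eqref{re1}--\eqref{re2}, test \eqref{sc2} with $\frac{\tau^2}{2\mu_0\tau_0}\nabla\times\delta_{2\tau}\bfE_h^{n}$ (your ``eliminate $\nabla\times\bfE_h^n$'' step, which is where the extra $\frac{\tau}{4\mu_0\tau_0}$ energy terms come from) and with $\tau\ov{H}_h^{n}$, absorb the $\tau^2$-corrections of the curl and surface energies into the kinetic term via the inverse and trace inequalities under \eqref{CFL}, perform summation by parts in time on the two coupling sums $\frac{\tau}{\tau_0}\sum_n(H_h^{n-\frac12},\nabla\times\delta_{2\tau}\bfE_h^n)$ and $\frac{\tau}{\mu_0}\sum_n(K_s^n,\nabla\times\delta_{2\tau}\bfE_h^n)$, and finish with discrete Gronwall. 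All of this matches the paper.

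However, there is one genuine gap, and it sits exactly at the spot you flag as ``the most delicate bookkeeping'' without resolving it. After summation by parts, the $H$-coupling sum produces terms of the form $\frac{\tau}{2\tau_0}\sum_{n}(\delta_{\tau}H_h^{n-2}+\delta_{\tau}H_h^{n-1},\nabla\times\bfE_h^{n-1})$ together with boundary terms like $\frac{1}{2\tau_0}(H_h^{m-\frac12},\nabla\times\bfE_h^{m+1})$. Young's inequality turns these into $\tau\sum_n\|\delta_{\tau}H_h^{n}\|^2$ and $O(1)$-weighted $\|H_h^{m-\frac12}\|^2$, $\|H_h^{m-\frac32}\|^2$ terms, and \emph{none of these is controlled by the energy} \eqref{ENG_lf}: $\delta_{\tau}H_h^{n}$ does not appear in $ENG_n$ at all, and $\|H_h^{m-\frac12}\|^2$ appears only in $ENG_{m-1}$ with no small factor, so a bound of the form $ENG_m\le C\,ENG_{m-1}+\dots$ with $C=O(\tau_0^{-2})$ would blow up like $C^m$ under iteration. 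Young plus Gronwall alone therefore does not close the argument. The paper's resolution is to go back to the scheme itself: testing \eqref{sc2} with $\psi_h=\delta_{\tau}H_h^{n}$ gives $\mu_0\|\delta_{\tau}H_h^{n}\|^2\le\frac{2}{\mu_0}\left(\|\nabla\times\bfE_h^{n}\|^2+\|K_s^{n}\|^2\right)$, and the telescoping identity $H_h^{m-\frac12}=\tau\sum_{n=1}^{m-1}\delta_{\tau}H_h^{n}+H_h^{\frac12}$ (with Cauchy--Schwarz) converts the $O(1)$-weighted magnetic boundary terms into $\tau\sum_n\|\nabla\times\bfE_h^n\|^2+\tau\sum_n\|K_s^n\|^2+\|H_h^{\frac12}\|^2$, all of which are Gronwall-compatible; an analogous telescoping with $\delta_{2\tau}K_s$ handles $\|K_s^m\|^2$, $\|K_s^{m-1}\|^2$ and explains the data combination $\|K_s^0\|^2+\|K_s^1\|^2+\tau\sum_n\|\delta_{2\tau}K_s^n\|^2$ in \eqref{stab_lf}. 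Without this extra use of \eqref{sc2} your plan stalls; with it, your argument coincides with the paper's.
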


\begin{proof} To make our proof easy to follow, we divite it into several major steps.

(I)~ Choosing $\bfphi_h=\tau \delta_{2\tau}\bfE_h^{n}$ in \eqref{sc1}, and using the following identities
\begin{eqnarray}
&& \tau ( \delta_{\tau}^2u^{n}, \delta_{2\tau}u^{n}) = \frac{1}{2}(|| \delta_{\tau}u^{n+ \frac{1}{2}}||^2
- || \delta_{\tau}u^{n- \frac{1}{2}}||^2), \label{re1}\\
&& \tau ( u^{n}, \delta_{2\tau}u^{n}) = \frac{1}{4}(|| u^{n+1}||^2
- ||u^{n-1}||^2) - \frac{\tau^2}{4}(|| \delta_{\tau}u^{n+ \frac{1}{2}}||^2
- || \delta_{\tau}u^{n-\frac{1}{2}}||^2), \label{re2}
\end{eqnarray}
with $u^n=\bfE_h^n$, we have
\begin{eqnarray}
 && \frac{\eps_0}{2} (|| \delta_{\tau}\bfE_h^{n+ \frac{1}{2}}||^2
- || \delta_{\tau}\bfE_h^{n- \frac{1}{2}}||^2)
+  \frac{\tau\eps_0}{\tau_0}||\delta_{2\tau}\bfE_h^{n}||^2 \nonumber \\
&&  +\frac{1}{4\mu_0} \left[ (||\nabla \times \bfE^{n+1}_h||^2 - ||\nabla \times \bfE^{n-1}_h||^2)
- \tau^2(||\nabla \times \delta_{\tau}\bfE_h^{n+ \frac{1}{2}}||^2 - ||\nabla \times \delta_{\tau}\bfE_h^{n-\frac{1}{2}}||^2)\right] \nonumber \\
&&  +\frac{\sigma_0}{4\tau_0} \left[ (||\bfE^{n+1}_h||^2_{\Gamma}  - ||\bfE^{n-1}_h||^2_{\Gamma} )
- \tau^2(||\delta_{\tau}\bfE_h^{n+ \frac{1}{2}}||^2_{\Gamma}  - ||\delta_{\tau}\bfE_h^{n-\frac{1}{2}}||^2_{\Gamma} )\right] \nonumber \\
&&  \hskip 1in = \frac{\tau}{\tau_0}(\ov{H}^{n}_{h}, \nabla\times \delta_{2\tau}\bfE_h^{n}) 
-  \frac{\tau}{\mu_0}(K_s^n, \nabla \times \delta_{2\tau}\bfE_h^{n}). \label{re3} 
\end{eqnarray}

Taking $\psi_h= \frac{\tau^2}{2\mu_0\tau_0}\nabla \times \delta_{2\tau}\bfE_h^{n}$
in \eqref{sc2}, and using \eqref{re2} with $u^n=\bfE_h^n$, we obtain
\begin{eqnarray}
 && \frac{\tau}{2\tau_0} (H^{n+ \frac{1}{2}}_{h} - H^{n- \frac{1}{2}}_{h}, \nabla \times \delta_{2\tau}\bfE_h^{n}) = -\frac{\tau^2}{2\mu_0\tau_0} (K_s^{n}, \nabla \times \delta_{2\tau}\bfE_h^{n})   \label{re4}   \\
&&\quad 
- \frac{\tau}{8\mu_0\tau_0} \left[ (||\nabla \times \bfE^{n+1}_h||^2 - ||\nabla \times \bfE^{n-1}_h||^2)
- \tau^2(||\nabla \times \delta_{\tau}\bfE_h^{n+ \frac{1}{2}}||^2 - ||\nabla \times \delta_{\tau}\bfE_h^{n-\frac{1}{2}}||^2)\right].   \nonumber
\end{eqnarray}

Taking $\psi_h= \tau\ov{H}_h^{n}$
in \eqref{sc2},  we obtain
\begin{eqnarray}
 \frac{\mu_0}{2} (||H^{n+ \frac{1}{2}}_{h}||^2 - ||H^{n- \frac{1}{2}}_{h}||^2)
 =    - \tau  (\nabla \times \bfE^{n}_h,\ov{H}_h^{n})  - \tau (K_s^{n}, \ov{H}_h^{n}).
   \label{re6}  
\end{eqnarray}

Adding \eqref{re3}, \eqref{re4}, and \eqref{re6} together, then summing up the result from $n=1$ to
any $m\leq N_t-1$, we have
\begin{eqnarray}
 && \frac{\eps_0}{2} (|| \delta_{\tau}\bfE_h^{m+ \frac{1}{2}}||^2
- || \delta_{\tau}\bfE_h^{\frac{1}{2}}||^2)
+  \frac{\tau\eps_0}{\tau_0}\sum_{n=1}^{m}||\delta_{2\tau}\bfE_h^{n}||^2 \nonumber \\
&&  +\frac{1}{4\mu_0} \left[ (||\nabla \times \bfE^{m+1}_h||^2 + ||\nabla \times \bfE^{m}_h||^2 
- ||\nabla \times \bfE^{1}_h||^2- ||\nabla \times \bfE^{0}_h||^2)  \right.   \nonumber \\
&& \hskip 1in \left.
- \tau^2(||\nabla \times \delta_{\tau}\bfE_h^{m+ \frac{1}{2}}||^2 - ||\nabla \times \delta_{\tau}\bfE_h^{\frac{1}{2}}||^2)\right] \nonumber \\
&&  +\frac{\sigma_0}{4\tau_0} \left[ (||\bfE^{m+1}_h||^2_{\Gamma} + ||\bfE^{m}_h||^2_{\Gamma} 
- ||\bfE^{1}_h||^2_{\Gamma}- ||\bfE^{0}_h||^2_{\Gamma} )
- \tau^2(||\delta_{\tau}\bfE_h^{m+ \frac{1}{2}}||^2_{\Gamma}  - ||\delta_{\tau}\bfE_h^{\frac{1}{2}}||^2_{\Gamma} )\right] \nonumber \\
&& +  \frac{\mu_0}{2} (||H^{m+ \frac{1}{2}}_{h}||^2 - ||H^{\frac{1}{2}}_{h}||^2)   \nonumber \\
&& + \frac{\tau}{8\mu_0\tau_0} \left[ (||\nabla \times \bfE^{m+1}_h||^2 +||\nabla \times \bfE^{m}_h||^2  - ||\nabla \times \bfE^{1}_h||^2 - ||\nabla \times \bfE^{0}_h||^2)   \right.   \nonumber \\
&& \hskip 1in \left.
- \tau^2(||\nabla \times \delta_{\tau}\bfE_h^{m+ \frac{1}{2}}||^2 - ||\nabla \times \delta_{\tau}\bfE_h^{\frac{1}{2}}||^2)\right] \nonumber \\
&&  \quad = \frac{\tau}{\tau_0}\sum_{n=1}^{m}(H^{n-\frac{1}{2}}_{h}, \nabla\times \delta_{2\tau}\bfE_h^{n}) 
-  \frac{\tau}{\mu_0}\sum_{n=1}^{m}(K_s^n, \nabla \times \delta_{2\tau}\bfE_h^{n})
-   \frac{\tau^2}{2\mu_0\tau_0}\sum_{n=1}^{m}(K_s^n, \nabla \times \delta_{2\tau}\bfE_h^{n})  \nonumber\\
&& \quad\quad    - \tau\sum_{n=1}^{m}  (\nabla \times \bfE^{n}_h,\ov{H}_h^{n}) 
 - \tau \sum_{n=1}^{m}(K_s^{n}, \ov{H}_h^{n}). \label{re8}
\end{eqnarray}

(II)~ By the definition of the discrete energy $ENG_m$ and dropping the non-negative term 
$ \frac{\tau\eps_0}{\tau_0}\sum_{n=1}^{m}||\delta_{2\tau}\bfE_h^{n}||^2 $ on the left hand side of \eqref{re8}, we can rewrite \eqref{re8}  as follows
 \begin{eqnarray}
 && ENG_m \leq ENG_0 + \frac{\tau^2}{2\mu_0}(||\nabla \times \delta_{\tau}\bfE_h^{m+ \frac{1}{2}}||^2 - ||\nabla \times \delta_{\tau}\bfE_h^{\frac{1}{2}}||^2) \nonumber \\
&&\quad +  \frac{\sigma_0\tau^2}{2\tau_0}(||\delta_{\tau}\bfE_h^{m+ \frac{1}{2}}||^2_{\Gamma} 
- ||\delta_{\tau}\bfE_h^{\frac{1}{2}}||^2_{\Gamma})
+  \frac{\tau^3}{4\mu_0\tau_0}(||\nabla \times \delta_{\tau}\bfE_h^{m+ \frac{1}{2}}||^2 - ||\nabla \times \delta_{\tau}\bfE_h^{\frac{1}{2}}||^2) \nonumber \\
 &&  \quad + \frac{\tau}{\tau_0}\sum_{n=1}^{m}(H^{n-\frac{1}{2}}_{h}, \nabla\times \delta_{2\tau}\bfE_h^{n}) 
-  \frac{\tau}{\mu_0}\sum_{n=1}^{m}(K_s^n, \nabla \times \delta_{2\tau}\bfE_h^{n})
-   \frac{\tau^2}{2\mu_0\tau_0}\sum_{n=1}^{m}(K_s^n, \nabla \times \delta_{2\tau}\bfE_h^{n})  \nonumber\\
&& \quad\quad    - \tau\sum_{n=1}^{m}  (\nabla \times \bfE^{n}_h,\ov{H}_h^{n}) 
 - \tau \sum_{n=1}^{m}(K_s^{n}, \ov{H}_h^{n}) =: ENG_0 + \sum_{k=1}^{8} Sta_k. \label{re10}
\end{eqnarray}

Now we just need to estimate each $Sta_k$ in \eqref{re10}. Using the inverse estimate \eqref{inv}, it is easy to see that
\begin{eqnarray}
Sta_1 \leq \frac{\tau^2}{2\mu_0}||\nabla \times \delta_{\tau}\bfE_h^{m+ \frac{1}{2}}||^2 
\leq \frac{C_{\mathrm{in}}^2}{2\mu_0}\cdot \tau^2h^{-2}||\delta_{\tau}\bfE_h^{m+ \frac{1}{2}}||^2,
\label{re12}
\end{eqnarray}
and
\begin{eqnarray}
Sta_3 \leq \frac{\tau}{4\mu_0\tau_0}\cdot \tau^2||\nabla \times \delta_{\tau}\bfE_h^{m+ \frac{1}{2}}||^2 
\leq \frac{\tau C_{\mathrm{in}}^2}{4\mu_0\tau_0}\cdot \tau^2h^{-2}||\delta_{\tau}\bfE_h^{m+ \frac{1}{2}}||^2.
\label{re14}
\end{eqnarray}

Using the trace inequality \eqref{inv}, we have
\begin{eqnarray}
Sta_2 \leq 
\frac{\sigma_0\tau^2}{2\tau_0}||\delta_{\tau}\bfE_h^{m+ \frac{1}{2}}||^2_{\Gamma} 
\leq \frac{\sigma_0C_{\mathrm{tr}}^2}{2\tau_0}\cdot \tau^2h^{-1}||\delta_{\tau}\bfE_h^{m+ \frac{1}{2}}||^2. 
\label{re16}
\end{eqnarray}

Similarly by the inverse estimate \eqref{inv} and the Cauchy-Schwarz inequality, we have
 \begin{eqnarray}
Sta_6 &=& -\frac{\tau^2}{2\mu_0\tau_0}\sum_{n=1}^{m}(K_s^n, \nabla \times \delta_{2\tau}\bfE_h^{n}) 
\leq \frac{\tau}{2\mu_0\tau_0}\sum_{n=1}^{m}||K_s^n||\cdot C_{\mathrm{in}}\tau h^{-1}||\frac{1}{2}(\delta_{\tau}\bfE_h^{n+ \frac{1}{2}} + \delta_{\tau}\bfE_h^{n- \frac{1}{2}})|| \nonumber \\
&\leq& \tau\cdot \frac{C_{\mathrm{in}}\tau h^{-1}}{4\mu_0\tau_0}\sum_{n=1}^{m}\left[||K_s^n||^2
+ \frac{1}{2}(||\delta_{\tau}\bfE_h^{n+ \frac{1}{2}}||^2 + ||\delta_{\tau}\bfE_h^{n- \frac{1}{2}}||^2)\right].
\label{re18}
\end{eqnarray}

It is also easy to obtain the following estimates:
 \begin{eqnarray}
Sta_7 &=&  - \tau\sum_{n=1}^{m}  (\frac{1}{\sqrt{\mu_0}}\nabla \times \bfE^{n}_h,\sqrt{\mu_0}\ov{H}_h^{n}) \nonumber \\
&\leq&  \frac{\tau}{2} \sum_{n=1}^{m}  \left[\frac{1}{\mu_0}||\nabla \times \bfE^{n}_h||^2 
+ \frac{\mu_0}{2}(||H_h^{n+ \frac{1}{2}}||^2 + ||H_h^{n- \frac{1}{2}}||^2)\right],
\label{re20}
\end{eqnarray}
and 
 \begin{eqnarray}
Sta_8 \leq \frac{\tau}{2} \sum_{n=1}^{m}\left[\frac{1}{\mu_0}||K_s^{n}||^2 + \frac{\mu_0}{2}(||H_h^{n+ \frac{1}{2}}||^2 + ||H_h^{n- \frac{1}{2}}||^2)\right].
\label{re22}
\end{eqnarray}

The estimates of $Sta_4$ and $Sta_5$ need a special treatment, since $\nabla\times \delta_{2\tau}\bfE_h^{n}$ can not be controlled by $ENG_m$. In the next major part, we will carry out the estimates of  $Sta_4$ and $Sta_5$.

(III)~Note that
\begin{eqnarray}
&& \sum_{n=1}^{m}(H^{n-\frac{1}{2}}_{h}, \nabla\times \delta_{2\tau}\bfE_h^{n})
= \frac{1}{2\tau} \sum_{n=1}^{m}(H^{n-\frac{1}{2}}_{h}, \nabla\times \bfE_h^{n+1})
- \frac{1}{2\tau} \sum_{n=1}^{m}(H^{n-\frac{1}{2}}_{h}, \nabla\times \bfE_h^{n-1}) \nonumber \\
&=& \frac{1}{2\tau} \sum_{n=3}^{m+2}(H^{n-\frac{5}{2}}_{h}, \nabla\times \bfE_h^{n-1})
- \frac{1}{2\tau} \sum_{n=1}^{m}(H^{n-\frac{1}{2}}_{h}, \nabla\times \bfE_h^{n-1})
\nonumber \\
&=& -\frac{1}{2}\sum_{n=3}^{m}(\delta_{\tau}H^{n-2}_{h}+\delta_{\tau}H^{n-1}_{h}, \nabla\times \bfE_h^{n-1})
+ \frac{1}{2\tau} (H^{m-\frac{1}{2}}_{h}, \nabla\times \bfE_h^{m+1})  \nonumber \\
&& \quad 
+ \frac{1}{2\tau} (H^{m-\frac{3}{2}}_{h}, \nabla\times \bfE_h^{m})
- \frac{1}{2\tau} (H^{\frac{1}{2}}_{h}, \nabla\times \bfE_h^{0})
- \frac{1}{2\tau} (H^{\frac{3}{2}}_{h}, \nabla\times \bfE_h^{1}).
\label{re24}
\end{eqnarray}

Using \eqref{re24} and the inequality $ab\leq \frac{1}{4\delta}a^2 + \delta b^2$, we have
\begin{eqnarray}
Sta_4  &=& -\frac{\tau}{2\tau_0}\sum_{n=3}^{m}(\delta_{\tau}H^{n-2}_{h}+\delta_{\tau}H^{n-1}_{h}, \nabla\times \bfE_h^{n-1})
+ \frac{1}{2\tau_0} (H^{m-\frac{1}{2}}_{h}, \nabla\times \bfE_h^{m+1})  \nonumber \\
&& \quad 
+ \frac{1}{2\tau_0} (H^{m-\frac{3}{2}}_{h}, \nabla\times \bfE_h^{m})
- \frac{1}{2\tau_0} (H^{\frac{1}{2}}_{h}, \nabla\times \bfE_h^{0})
- \frac{1}{2\tau_0} (H^{\frac{3}{2}}_{h}, \nabla\times \bfE_h^{1}) \nonumber \\
&\leq& \frac{\tau}{4\tau_0}\sum_{n=3}^{m}(\frac{\mu_0}{2}||\delta_{\tau}H^{n-2}_{h}||^2
+\frac{\mu_0}{2}||\delta_{\tau}H^{n-1}_{h}||^2 + \frac{1}{\mu_0}||\nabla\times \bfE_h^{n-1}||^2)   \nonumber \\
&&\quad 
+ \frac{1}{8\mu_0}||\nabla\times \bfE_h^{m+1}||^2
+  \frac{\mu_0}{2\tau_0^2} ||H^{m-\frac{1}{2}}_{h}||^2  
+ \frac{1}{8\mu_0}||\nabla\times \bfE_h^{m}||^2
+  \frac{\mu_0}{2\tau_0^2} ||H^{m-\frac{3}{2}}_{h}||^2     \nonumber \\
&&\quad 
+ \frac{1}{8\mu_0}||\nabla\times \bfE_h^{0}||^2
+  \frac{\mu_0}{2\tau_0^2} ||H^{\frac{1}{2}}_{h}||^2  
+ \frac{1}{8\mu_0}||\nabla\times \bfE_h^{1}||^2
+  \frac{\mu_0}{2\tau_0^2} ||H^{\frac{3}{2}}_{h}||^2.
\label{re26}
\end{eqnarray}

By the same technique, we have
\begin{eqnarray}
Sta_5  &=& -\frac{\tau}{\mu_0}\sum_{n=3}^{m}(\delta_{2\tau}K_s^{n-1}, \nabla\times \bfE_h^{n-1})
+ \frac{1}{2\mu_0} (K_s^{m}, \nabla\times \bfE_h^{m+1})  \nonumber \\
&& \quad 
+ \frac{1}{2\mu_0} (K_s^{m-1}, \nabla\times \bfE_h^{m})
- \frac{1}{2\mu_0} (K_s^{1}, \nabla\times \bfE_h^{0})
- \frac{1}{2\mu_0} (K_s^{2}, \nabla\times \bfE_h^{1}) \nonumber \\
&\leq& \frac{\tau}{2\mu_0}\sum_{n=3}^{m}(||\delta_{2\tau}K_s^{n-1}||^2
+ ||\nabla\times \bfE_h^{n-1}||^2)   \nonumber \\
&&\quad 
+ \frac{1}{8\mu_0}||\nabla\times \bfE_h^{m+1}||^2
+  \frac{1}{2\mu_0} ||K_s^{m}||^2  
+ \frac{1}{8\mu_0}||\nabla\times \bfE_h^{m}||^2
+  \frac{1}{2\mu_0} ||K_s^{m-1}||^2     \nonumber \\
&&\quad 
+ \frac{1}{8\mu_0}||\nabla\times \bfE_h^{0}||^2
+  \frac{1}{2\mu_0} ||K_s^{1}||^2  
+ \frac{1}{8\mu_0}||\nabla\times \bfE_h^{1}||^2
+  \frac{1}{2\mu_0} ||K_s^{2}||^2.
\label{re28}
\end{eqnarray}

Substituting all the above estimates of $Sta_k$ into \eqref{re10} and collecting like terms, we obtain
\begin{eqnarray}
 && ENG_m \leq ENG_0 + \left[\frac{C_{\mathrm{in}}^2\tau^2h^{-2}}{2\mu_0} + 
\frac{\sigma_0C_{\mathrm{tr}}^2\tau^2h^{-2}}{2\tau_0}
+ \frac{\tau C_{\mathrm{in}}^2\tau^2h^{-2}}{4\mu_0\tau_0}
+ \frac{C_{\mathrm{in}}\tau^2h^{-1}}{8\mu_0\tau_0}\right] ||\delta_{\tau}\bfE_h^{m+ \frac{1}{2}}||^2
\nonumber \\
&&\quad + \tau (\frac{C_{\mathrm{in}}\tau h^{-1}}{4\mu_0\tau_0} + \frac{1}{2\mu_0})\sum_{n=1}^{m}||K_s^n||^2
+ \tau\cdot \frac{C_{\mathrm{in}}\tau h^{-1}}{4\mu_0\tau_0}\sum_{n=0}^{m-1}||\delta_{\tau}\bfE_h^{n+ \frac{1}{2}}||^2   \nonumber \\
&&\quad 
+ \tau (\frac{1}{2\mu_0} + \frac{1}{4\mu_0\tau_0} + \frac{1}{2\mu_0})\sum_{n=1}^{m}||\nabla\times\bfE_h^{n}||^2 +  \frac{\tau \mu_0}{2}||H_h^{m+\frac{1}{2}}||^2
+ \tau \mu_0\sum_{n=0}^{m-1}||H_h^{n+\frac{1}{2}}||^2    \nonumber \\
&&\quad 
+ \frac{\tau \mu_0}{4\tau_0}\sum_{n=1}^{m-1}||\delta_{\tau}H_h^{n}||^2  
+ \frac{\mu_0}{2\tau_0^2}(||H_h^{m-\frac{1}{2}}||^2 + ||H_h^{m-\frac{3}{2}}||^2)  
+ \frac{\mu_0}{2\tau_0^2}(||H_h^{\frac{1}{2}}||^2 + ||H_h^{\frac{3}{2}}||^2)  \nonumber \\
&&\quad 
+ \frac{1}{4\mu_0} (||\nabla\times\bfE_h^{m+1}||^2 + ||\nabla\times\bfE_h^{m}||^2)
+ \frac{1}{4\mu_0} (||\nabla\times\bfE_h^{1}||^2 + ||\nabla\times\bfE_h^{0}||^2) \nonumber \\
&&\quad + \frac{\tau}{2\mu_0} \sum_{n=2}^{m-1}||\delta_{2\tau}K_s^{n}||^2
+ \frac{1}{2\mu_0} (||K_s^{m}||^2 + ||K_s^{m-1}||^2 + ||K_s^{1}||^2  + ||K_s^{2}||^2).
\label{re30}
\end{eqnarray}

(IV)~ Now all the right hand side terms of \eqref{re30}, except $\frac{\tau \mu_0}{4\tau_0}\sum_{n=1}^{m-1}||\delta_{\tau}H_h^{n}||^2$ and $\frac{\mu_0}{2\tau_0^2}(||H_h^{m-\frac{1}{2}}||^2 + ||H_h^{m-\frac{3}{2}}||^2) $, can be controlled by choosing $\tau$ small enough and using the discrete Gronwall inequality. To bound these two terms,  squaring the following identity
\begin{eqnarray}
H_h^{m-\frac{1}{2}} &=& (H_h^{m-\frac{1}{2}}-H_h^{m-\frac{3}{2}})
+  (H_h^{m-\frac{3}{2}}-H_h^{m-\frac{5}{2}})
+ \cdots +  (H_h^{\frac{3}{2}}-H_h^{\frac{1}{2}}) +  H_h^{\frac{1}{2}}  \nonumber \\
&=& \tau  \sum_{n=1}^{m-1}\delta_{\tau}H_h^{n} + H_h^{\frac{1}{2}},
\label{re32}
\end{eqnarray}
we have
\begin{eqnarray}
||H_h^{m-\frac{1}{2}}||^2 &\leq& 2(\tau^2 || \sum_{n=1}^{m-1}\delta_{\tau}H_h^{n}||^2
 + ||H_h^{\frac{1}{2}}||^2)
\leq 2 ||H_h^{\frac{1}{2}}||^2 + 2\tau^2 (\sum_{n=1}^{m-1}1^2)( \sum_{n=1}^{m-1}||\delta_{\tau}H_h^{n}||^2 )  \nonumber \\
&\leq& 2 ||H_h^{\frac{1}{2}}||^2 + 2T\tau \sum_{n=1}^{m-1}||\delta_{\tau}H_h^{n}||^2.
\label{re34}
\end{eqnarray}

By the same argument, we have
\begin{eqnarray}
||H_h^{m-\frac{3}{2}}||^2 = ||\tau  \sum_{n=1}^{m-2}\delta_{\tau}H_h^{n} + H_h^{\frac{1}{2}}||^2
\leq 2 ||H_h^{\frac{1}{2}}||^2 + 2T\tau \sum_{n=1}^{m-2}||\delta_{\tau}H_h^{n}||^2.
\label{re35}
\end{eqnarray}

To bound $\delta_{\tau}H^{n}_{h}$, taking $\psi_h=\delta_{\tau}H^{n}_{h}$ in \eqref{sc2}, we obtain
\begin{eqnarray}
\mu_0 ||\delta_{\tau}H^{n}_{h}||^2 &=&   - (\nabla \times E_{h}^{n}, \delta_{\tau}H^{n}_{h})
 -(K_{s}^{n}, \delta_{\tau}H^{n}_{h}) \nonumber \\
&\leq& (\frac{\mu_0}{4}||\delta_{\tau}H^{n}_{h}||^2 + \frac{1}{\mu_0} ||\nabla \times E_{h}^{n}||^2)
+  (\frac{\mu_0}{4}||\delta_{\tau}H^{n}_{h}||^2 + \frac{1}{\mu_0} ||K_{s}^{n}||^2),
 \label{re36} 
\end{eqnarray}
which leads to
\begin{eqnarray}
\mu_0 ||\delta_{\tau}H^{n}_{h}||^2 
\leq \frac{2}{\mu_0} ||\nabla \times E_{h}^{n}||^2
+   \frac{2}{\mu_0} ||K_{s}^{n}||^2.
 \label{re38} 
\end{eqnarray}

We can use similar techniques 
to bound the terms $K_s^{m}$ and $K_s^{m-1}$ in \eqref{re30}, even though they are given source functions and we can keep them as they are. When $m$ is even, we have
\begin{eqnarray}
||K_s^{m}||^2 = ||2\tau  \sum_{n=1}^{m-1}\delta_{2\tau}K_s^{n} + K_s^{0}||^2
\leq 2 ||K_s^{0}||^2 + 4T\tau \sum_{n=1}^{m-1}||\delta_{2\tau}K_s^{n}||^2.
\label{re40}
\end{eqnarray}

When $m$ is odd, we have
\begin{eqnarray}
||K_s^{m}||^2 = ||2\tau  \sum_{n=2}^{m-1}\delta_{2\tau}K_s^{n} + K_s^{1}||^2
\leq 2 ||K_s^{1}||^2 + 4T\tau \sum_{n=2}^{m-1}||\delta_{2\tau}K_s^{n}||^2.
\label{re42}
\end{eqnarray}

Substituting the estimates of \eqref{re34}, \eqref{re35}, and \eqref{re38}-\eqref{re42}  into \eqref{re30}, and 
choosing $\tau$ small enough, e.g.,
\begin{eqnarray}
&& \tau\leq 1, ~~ \frac{C_{\mathrm{in}}^2\tau^2h^{-2}}{2\mu_0}\leq \frac{\eps_0}{8}~ (\mbox{or}~ \tau \leq \frac{h}{2C_{\mathrm{in}}C_v}), ~~\frac{\sigma_0C_{\mathrm{tr}}^2\tau^2h^{-2}}{2\tau_0}\leq \frac{\eps_0}{8} ~(\mbox{or}~ \tau\leq \frac{h\sqrt{\eps_0\tau_0}}{2C_{\mathrm{tr}}\sqrt{\sigma_0}}), \nonumber \\
&& \frac{C_{\mathrm{in}}^2\tau^2h^{-2}}{4\mu_0\tau_0}\leq \frac{\eps_0}{8}~ (\mbox{or}~ \tau\leq \frac{h\sqrt{\tau_0}}{\sqrt{2}C_{\mathrm{in}}C_v}), ~~\frac{C_{\mathrm{in}}\tau h^{-1}}{8\mu_0\tau_0}\leq \frac{\eps_0}{8} ~(\mbox{or}~ \tau\leq \frac{h\tau_0}{C_{\mathrm{in}}C_v}),
 \label{re44} 
\end{eqnarray}
which is equivalent to \eqref{CFL}, we complete the proof by the discrete Gronwall inequality.
\end{proof}
\subsection{The error estimate}

To carry out the error estimate of our scheme \eqref{sc1}-\eqref{sc2}, we split the solution error for $\bfE$ as follows:
\begin{eqnarray}
\bfE(\bfx,t_n) - \bfE_h^n =  (\bfE(\bfx,t_n) - \Pi_c\bfE^n) - (\bfE_h^n - \Pi_c\bfE^n) 
=: \bfE^n_{\xi} - \bfE^n_{\eta},
\label{re50} 
\end{eqnarray}
where for simplicity we denote $\bfE^n:=\bfE(\bfx,t_n)$. Similarly, we can define the solution error 
for $H$ as follows:
\begin{eqnarray}
H(\bfx,t_n) - H_h^n =  (H(\bfx,t_n) - \Pi_2H^n) - (H_h^n - \Pi_2H^n) 
=: H^n_{\xi} - H^n_{\eta}.
\label{re52} 
\end{eqnarray}

Integrating \eqref{p5} with respect to $t$ from $t_{n-1}$ to $t_{n+1}$, then divide the result by $2\tau_0\tau$, we obtain
\begin{eqnarray}
&&   \eps_0 (\delta_{2\tau}\pa_{t} \bfE^n, \bfphi) +  \frac{\eps_0}{\tau_0} (\delta_{2\tau}\bfE^n, \bfphi)
+  \frac{1}{\mu_0} (\frac{1}{2\tau}\int_{t_{n-1}}^{t_{n+1}}\nabla\times \bfE~dt, \nabla\times \bfphi)  \label{re54} \\
&=&   \frac{1}{\tau_0}(\frac{1}{2\tau}\int_{t_{n-1}}^{t_{n+1}}H~dt, \nabla\times \bfphi) -  \frac{1}{\mu_0} (\frac{1}{2\tau}\int_{t_{n-1}}^{t_{n+1}}K_s~dt, \nabla\times \bfphi)  - \frac{\sigma_0}{\tau_0} \langle \frac{1}{2\tau}\int_{t_{n-1}}^{t_{n+1}}\bfE~dt, \bfphi\rangle_{\Gamma}.
    \nonumber
   \end{eqnarray}

Integrating \eqref{p6} with respect to $t$ from $t_{n-\frac{1}{2}}$ to $t_{n+\frac{1}{2}}$, then divie the result by $\tau$, we have
\begin{eqnarray}
 \mu_0 (\delta_{\tau}H^{n}, \psi) = -(\frac{1}{\tau}\int_{t_{n-\frac{1}{2}}}^{t_{n+\frac{1}{2}}}\nabla\times \bfE~dt, \psi) -  (\frac{1}{\tau}\int_{t_{n-\frac{1}{2}}}^{t_{n+\frac{1}{2}}}K_s~dt, \psi). \label{re56} 
   \end{eqnarray}

Now subtracting \eqref{sc1} from \eqref{re54} with $\bfphi=\bfphi_h$, 
 \eqref{sc2} from \eqref{re56} with $\psi=\psi_h$, 
  using the error notation we introduced, and after some lengthy but straightforward algebra, we can obtain the error equations:
  \begin{eqnarray}
 && \eps_0 (\delta_{\tau}^2\bfE_{\eta}^{n}, \bfphi_h) 
 + \frac{\eps_0}{\tau_0}(\delta_{2\tau}\bfE_{\eta}^{n}, \bfphi_h) +\frac{1}{\mu_0} (\nabla \times \bfE^n_{\eta},\nabla \times \bfphi_h) \nonumber \\
&=&  \frac{1}{\tau_0}(\ov{H}^{n}_{\eta}, \nabla\times \bfphi_h) - \frac{\sigma_0}{\tau_0} \langle\bfE_{\eta}^n, \bfphi_h\rangle_{\Gamma} 
+\eps_0 (\delta_{\tau}^2\bfE_{\xi}^{n}, \bfphi_h) 
+ \eps_0 (\delta_{2\tau}\pa_t\bfE^{n} - \delta_{\tau}^2\bfE^{n}, \bfphi_h)  \nonumber \\
&& \quad
 + \frac{\eps_0}{\tau_0}(\delta_{2\tau}\bfE_{\xi}^{n}, \bfphi_h) 
+ \frac{1}{\mu_0} (\frac{1}{2\tau}\int_{t_{n-1}}^{t_{n+1}}\nabla \times (\bfE - \Pi_c\bfE^n)~dt,\nabla \times \bfphi_h)  \nonumber \\
&&\quad 
+  \frac{1}{\tau_0} (\frac{1}{2\tau}\int_{t_{n-1}}^{t_{n+1}}(H - \Pi_2\ov{H}^n)~dt,\nabla \times \bfphi_h) 
+  \frac{\sigma_0}{\tau_0} \langle \frac{1}{2\tau}\int_{t_{n-1}}^{t_{n+1}}(\bfE - \Pi_c\bfE^n)~dt, \bfphi_h\rangle_{\Gamma}    \nonumber \\
&&\quad 
-  \frac{1}{\mu_0} (\frac{1}{2\tau}\int_{t_{n-1}}^{t_{n+1}}(K_s - K_s^n)~dt,\nabla \times \bfphi_h), 
 \label{re58} 
\end{eqnarray}
and
 \begin{eqnarray}
 && \mu_0 (\delta_{\tau}H^{n}_{\eta}, \psi_h) + (\nabla \times \bfE_{\eta}^{n}, \psi_h) 
= \mu_0 (\delta_{\tau}H^{n}_{\xi}, \psi_h)  \nonumber \\
&&\quad
- (\frac{1}{\tau}\int_{t_{n-\frac{1}{2}}}^{t_{n+\frac{1}{2}}}\nabla\times (\Pi_c\bfE^n - \bfE)~dt, \psi_h)
-(\frac{1}{\tau}\int_{t_{n-\frac{1}{2}}}^{t_{n+\frac{1}{2}}}(K_{s}^{n}-K_s)~dt, \psi_h). \label{re60} 
\end{eqnarray}

Note that the error equation \eqref{re58} has the first five terms in the same form as  scheme \eqref{sc1}, and the rest terms are extra error terms due to the time approximation, and projection or interpolation.
Furthermore, the error equation \eqref{re60} has the first two terms in the same form as  scheme \eqref{sc2}, and the rest three terms are  error terms due to the time approximation, and projection or interpolation.
We want to remark that those extra error terms are $O(\tau^2 + h^r)$ by the interpolation error estimate \eqref{Int} and the projection error estimate \eqref{Pro}. Following the stability proof (due to its technicality, we skip the proof details), we have the following error estimate for our scheme \eqref{sc1}-\eqref{sc2}:
\begin{eqnarray}
||\nabla\times (\bfE^{m}_h - \bfE^{m})|| + ||H_h^{m+\frac{1}{2}} - H^{m+\frac{1}{2}} || \leq C(\tau^2 + h^r), 
\label{re62} 
\end{eqnarray}
where $r$ is the degree of our finite element spaces $\bfV_h$ and $U_h$.

\section{Numerical results}

In this section, we present some numerical results solved by our proposed numerical scheme.
The first example is developed to test the convergence of our scheme by using FEniCS \cite{Logg}, and the rest are carried out using NGSolve \cite{Schoeberl2021} to demonstrate that our reformulated graphene model can still generate surface plasmon polaritons.

  \subsection{Test of convergence rates}
The first example is developed to test the  convergence rate of our numerical scheme by a manufactured exact solution given as follows (adapted from our previous work \cite{Li_CMAWA2023}):
\begin{eqnarray*}
&& \mathbf{E}(x,y,t) =\begin{pmatrix}
E_x \\
E_y \\
\end{pmatrix} =\begin{pmatrix}
\sin(2 \pi x)\sin(2\pi y)\sin(2\pi t)\\
\cos(2 \pi x)\cos(2\pi y)\sin(2\pi t) \\
\end{pmatrix}, \\
&& 
H_1(x,y,t) = \frac{1}{1+4\pi^2}\sin(2\pi x)\sin(2\pi y)\sin(2\pi t), \\
&&
H_2(x,y,t) = \frac{1}{1+4\pi^2}\sin(2\pi x)\sin(2\pi y)(2\pi \cos(2\pi t) -2\pi \exp(-t)),
\end{eqnarray*}
which satisfies the following weak form for the graphene model equations: For any $\bfphi \in H_0(curl;\Omega)$ and $\psi \in L^2(\Omega)$,
\begin{eqnarray}
    && \tau_0\eps_0 (\pa_{tt} \bfE, \bfphi) + \eps_0 (\pa_t \bfE, \bfphi) + \frac{\tau_0}{\mu_0} ( \nabla \times \bfE , \nabla \times \bfphi) = (H_1 ,\nabla \times \bfphi) - \sigma_0 \langle \bfE, \bfphi \rangle_{\Gamma}  \nonumber \\
    && \hskip 1.5in   + (\bff_1, \bfphi)+ (\tau_0\pa_t \bff_1, \bfphi) + \frac{\tau_0}{\mu_0} (f_2, \nabla 
    \times \bfphi), \quad\mbox{in } \O_1, \label{ex1} \\
    && \mu_0(\pa_t H_1,\psi) = (-\nabla \times \bfE + f_2,\psi), \quad\mbox{in } \O_1, \label{ex2} \\
&& \tau_0\eps_0 (\pa_{tt} \bfE, \bfphi) + \eps_0 (\pa_t \bfE, \bfphi) + \frac{\tau_0}{\mu_0} ( \nabla \times \bfE , \nabla \times \bfphi) = (H_2 ,\nabla \times \bfphi) - \sigma_0 \langle \bfE, \bfphi\rangle_{\Gamma}  \nonumber \\
    && \hskip 1.5in + (\bff_3, \bfphi) + (\tau_0\pa_t \bff_3, \bfphi) + \frac{\tau_0}{\mu_0} (f_4, \nabla 
    \times \bfphi), \quad\mbox{in } \O_2, \label{ex3} \\
    && \mu_0(\pa_t H_2, \psi)= (-\nabla \times \bfE + f_4,\psi), \quad\mbox{in } \O_2. \label{ex4}
\end{eqnarray}
Here the extra source terms $\bff_1, f_2, \bff_3, f_4$ can be derived from the given exact solution $\bfE, \bfJ, H_1$, and $H_2$ as in \cite{Li_CMAWA2023}. Note that the weak form is derived from \cite[(4.1)-(4.5)]{Li_CMAWA2023} by following the same steps to get our weak formulation \eqref{p5}-\eqref{p6} with added source terms.

 For simplicity, we choose the physical domain $\O = (0,1)^2$, which is split into two subdomains $\O_1 = (0,1)\times (0.5,1)$ and $\O_2 = (0,1)\times (0,0.5)$ with interface $\Gamma=\{y=0.5, x\in [0,1] \}$.
 We apply our developed scheme \eqref{sc1}-\eqref{sc2} to solve \eqref{ex1}-\eqref{ex4} 
 with physical parameters $\epsilon_0 = \mu_0 = \tau_0= \sigma_0=1$.

Here the added source terms $\bff_1, f_2$, and $\bff_3$ can be calculated from the given exact solution $\bfE, H_1$, and $H_2$. We use our scheme (\ref{sc1})-(\ref{sc2}) on the 
the same parameters and physical domain setup as \cite{Li_CMAWA2023}. 
We solve this example with a fixed small time step size $\tau=1\times 10^{-4}$, and various mesh sizes for $N_t=1000$ time steps. The obtained $L^2$ errors are presented in Table 1 and Table 2 for the RTN finite element spaces $U_h$ and $\bfV_h$ with polynomial degree $r=1, 2$, respectively.

\begin{table}[htbp]
	\begin{center}
		\caption{The obtained errors for  $ N_t=1000, \tau = 1\times 10^{-4}, r=1$. }
		{\begin{tabular}{c|cccccc}
				\hline
h& $\|\bfE -\bfE_h \|_{L^2(\O)}$ &rate&  $\| H -H_h \|_{L^2(\O)}$ & rate  \\ \hline

1/4 & $1.995472\times 10^{-2}$ & & $4.037259 \times 10^{-4}$ \\  
1/8 & $1.005105\times 10^{-3}$ & 0.989385  & $2.044797\times 10^{-4}$ & 0.981418 \\
1/16 & $5.035177\times 10^{-3}$ & 0.997231 & $1.014174\times 10^{-4}$ & 1.011653 \\
1/32 & $2.518828\times 10^{-3}$ & 0.999290 & $4.906353\times 10^{-5}$  & 1.047582 \\
1/64 & $1.259572\times 10^{-3}$ &0.999819& $2.327676\times 10^{-5}$  & 1.075761 \\
1/128 & $6.199001\times 10^{-4}$ & 1.022826 & $1.082733\times 10^{-5}$  & 1.104213\\
\hline
		\end{tabular}}
		\label{tab: Convective}
	\end{center}
\end{table}

\begin{table}[htbp]
	\begin{center}
		\caption{The obtained errors for  $ N_t=1000, \tau = 1\times 10^{-4}, r=2$. }
		{\begin{tabular}{c|cccccc}
				\hline
h& $\|\bfE -\bfE_h \|_{L^2(\O)}$ &rate&  $\| H -H_h \|_{L^2(\O)}$ & rate  \\ \hline

1/4 & $5.028653\times 10^{-2}$ & & $2.172637 \times 10^{-4}$ \\  
1/8 & $1.290415\times 10^{-3}$ & 1.962337  & $1.035932\times 10^{-4}$ & 1.068517 \\
1/16 & $3.186606\times 10^{-3}$ & 2.017743 & $4.847654\times 10^{-4}$ & 1.095571 \\
1/32 & $7.545130\times 10^{-3}$ & 2.078403 & $1.887592\times 10^{-5}$  & 1.360740 \\
1/64 & $2.112674\times 10^{-3}$ &1.836476& $4.771321\times 10^{-6}$  & 1.984086 \\
1/128 & $5.979253\times 10^{-4}$ &  1.821033& $1.182453\times 10^{-6}$  & 2.012605\\
\hline
		\end{tabular}}
		\label{tab: Convective}
	\end{center}
\end{table}

Then we test the convergence rate for $\tau$ by fixing $\tau=\frac{h}{200}$ to satisfy the stability condition. In Tables 3-4 the obtained $L^2$ errors for r=1,2 are presented.

\begin{table}[htbp]
	\begin{center}
		\caption{The obtained errors for  $ T=0.01, \tau = \frac{h}{200}, r=1$. }
		{\begin{tabular}{c|cccccc}
				\hline
h& $\|\bfE -\bfE_h \|_{L^2(\O)}$ &rate&  $\| H -H_h \|_{L^2(\O)}$ & rate  \\ \hline

1/10 & $8.441669\times 10^{-3}$ & & $1.768658\times 10^{-4}$ \\  
1/20 & $4.125569\times 10^{-3}$ & 1.032935  & $8.359509\times 10^{-5}$ & 1.081165 \\
1/40 & $2.038308\times 10^{-3}$ & 1.017221 & $3.914783\times 10^{-5}$ & 1.094486 \\
1/80 & $1.012953\times 10^{-3}$ & 1.008805 & $1.868672\times 10^{-5}$  & 1.066919 \\
1/160 & $5.069774\times 10^{-4}$ &0.998574& $1.035698\times 10^{-5}$  & 0.851410 \\
\hline
		\end{tabular}}
		\label{tab: Convective}
	\end{center}
\end{table}

\begin{table}[htbp]
	\begin{center}
		\caption{The obtained errors for  $ T=0.01, \tau = \frac{h}{200}, r=2$. }
		{\begin{tabular}{c|cccccc}
				\hline
h& $\|\bfE -\bfE_h \|_{L^2(\O)}$ &rate&  $\| H -H_h \|_{L^2(\O)}$ & rate  \\ \hline

1/10 & $5.648216\times 10^{-3}$ & & $2.931658\times 10^{-4}$ \\  
1/20 & $1.368327\times 10^{-3}$ & 2.045382  & $1.216507\times 10^{-4}$ & 1.268972 \\
1/40 & $3.277738\times 10^{-4}$ & 2.061641 & $5.237098\times 10^{-5}$ & 1.215905\\
1/80 & $7.647986\times 10^{-5}$ &2.099549 & $1.938811\times 10^{-5}$  & 1.433595 \\
1/160 & $2.140379\times 10^{-5}$ &1.837214& $4.807332\times 10^{-6}$  & 2.011864 \\
\hline
		\end{tabular}}
		\label{tab: Convective}
	\end{center}
\end{table}

\subsection{Simulation of surface plasmon polaritons on the graphene sheets}
In this section, we provide several numerical examples to illustrate the effectiveness of our graphene model in simulating the propagation of surface plasmon polaritons (SPPs) on graphene sheets.

To effectively demonstrate the SPPs propagating on  graphene sheets, we use a perfectly matched layer (PML) to surround the physical domain $\O$.  Since our current graphene model only involves the electric and magnetic fields, we just adopt the 2-D TEz Berenger's PML model \cite{Berenger1994}, which can be written as (cf. \cite[(12)-(15)]{Li2012cloak}): For any $(\bfx,t) \in \O_{\mathrm{pml}}\times (0,T]$,
\begin{eqnarray}
    && \eps_0 \pa_t E_x  + \sigma_y E_x = \pa_y (H_{zx} + H_{zy}),   \label{z1} \\
    && \eps_0 \pa_t E_y + \sigma_x E_y = -\pa_x (H_{zx} + H_{zy}),   \label{z1} \\
    && \mu_0 \pa_t H_{zx} + \frac{\mu_0}{\eps_0} \sigma_x H_{zx} = -\pa_x E_y,  \label{z3} \\
    && \mu_0 \pa_t H_{zy} + \frac{\mu_0}{\eps_0} \sigma_y H_{zy} = \pa_y E_x, \label{z4} 
\end{eqnarray}
where $\sigma_x(x)$ and $\sigma_y(y)$ are the nonnegative damping functions in the $x$ and $y$ directions, respectively, and $\O_{\mathrm{pml}}$ represents the PML region. Here 
$H_{zx}$ and $H_{zy}$ are the two splitted components of the orginal magnetic field $H_z$, i.e., $H_z:=H_{zx}
+H_{zy}$.

We propose the following
finite element scheme for the above PML model in $\O_{\mathrm{pml}}$: For any $n \geq 0$, find $\bfE_h^{n+1} \in \bfV_h^0, H_{zx}^{n+\frac{1}{2}}, H_{zy}^{n+\frac{1}{2}}  \in U_h$ such that 
\begin{flalign}
 & \eps_0 (\delta_{2\tau}\bfE_h^{n}, \bfphi_h) 
 + \eps_0(D_1\bfE_h^{n}, \bfphi_h) = (\ov{H}^{n}_{zx, h}, \nabla\times \bfphi_h) +(\ov{H}^{n}_{zy, h}, \nabla\times \bfphi_h), \label{pm1} \\
 & \mu_0 (\delta_{\tau}H^{n}_{zx,h}, \psi_h) + \frac{\mu_0}{\eps_0}(\sigma_x\ov{H}_{zx,h}^{n}, \psi_h)  =  - (\pa_x E_{y,h}^{n}, \psi_h), \label{pm2} \\
  & \mu_0 (\delta_{\tau}H^{n}_{zy,h}, \varphi_h ) + \frac{\mu_0}{\eps_0}(\sigma_y\ov{H}_{zy,h}^{n}, \varphi_h )  = (\pa_x E_{x,h}^{n}, \varphi_h ), \label{pm3}
\end{flalign}
hold true  for any test functions $\bfphi_h \in \bfV_h^0$, $\psi_h$, $\varphi_h \in U_h$, where $D_1 = \mathrm{diag}(\sigma_y, \sigma_x)$.

To simplify the implementation, we merge the graphene scheme (\ref{sc1})-(\ref{sc2}) and the PML scheme (\ref{pm1})-(\ref{pm3}) together by using subdomain dependent coefficients and 
rewrite them as follows:

\begin{flalign}
 & \left((\frac{\eps_0}{\tau^2}I+\frac{D_1}{2\tau}+\frac{C_1\eps_0}{2\tau\tau_0}I)\bfE^{n+1}_h, \bfphi_h\right)
 =\left(\frac{2\eps_0}{\tau^2}I\bfE^n_h, \bfphi_h\right)-\left((\frac{\eps_0}{\tau^2}I-\frac{D_1}{2\tau}-\frac{C_1\eps_0}{2\tau\tau_0}I)\bfE^{n+1}_h, \bfphi_h\right) \nonumber\\
 & \hskip 2in -\frac{C_1}{\mu_0}( \nabla \times \bfE^n_h, \nabla\times \bfphi_h) -\frac{C_1}{\mu_0}( K_{sh}^n, \nabla\times \bfphi_h) - \frac{\sigma_0}{\tau_0}\langle\bfE^n_h, \bfphi_h\rangle_{\Gamma}  \nonumber \\
& \hskip 2in +\frac{C_1}{2\tau_0} (H_{zx,h}^{n+\frac{1}{2}}+H_{zx,h}^{n-\frac{1}{2}}, \nabla \times \bfphi_h) +  \frac{C_1}{2\tau_0} (H_{zy,h}^{n+\frac{1}{2}}+H_{zy,h}^{n-\frac{1}{2}}, \nabla \times \bfphi_h) \nonumber \\
& \hskip 2in +\frac{C_2}{\tau} (H_{zx,h}^{n+\frac{1}{2}}-H_{zx,h}^{n-\frac{1}{2}}, \nabla \times \bfphi_h) +  \frac{C_2}{\tau} (H_{zy,h}^{n+\frac{1}{2}}-H_{zy,h}^{n-\frac{1}{2}}, \nabla \times \bfphi_h) ,   \label{N1}\\
&\left((\frac{\mu_0}{\tau}+\frac{\mu_0\sigma_x}{2\eps_0})H^{n+\frac{1}{2}}_{zx,h}, \psi_h\right)
 =\left((\frac{\mu_0}{\tau}-\frac{\mu_0\sigma_x}{2\eps_0})H^{n-\frac{1}{2}}_{zx,h}, \psi_h\right)
  - (\pa_x E_{y,h}^{n}, \psi_h)  -\frac{1}{2}(K_{s}^{n}, \psi_h ), \label{N2}\\
 & \left((\frac{\mu_0}{\tau}+\frac{\mu_0\sigma_y}{2\eps_0})H^{n+\frac{1}{2}}_{zy,h}, \psi_h\right)
 =\left((\frac{\mu_0}{\tau}-\frac{\mu_0\sigma_x}{2\eps_0})H^{n-\frac{1}{2}}_{zy,h}, \psi_h\right) + (\pa_y E_{x,h}^{n}, \psi_h)  -\frac{1}{2}(K_{s}^{n}, \psi_h ), \label{N3}
\end{flalign}
where we denote the identity matrix $I=\mathrm{diag}(1,1)$, write $ H_{h} = H_{zx,h} + H_{zy, h}$, and use the subdomain identity functions
\begin{eqnarray}
C_1 = \begin{cases}
1,  \quad\mbox{if}~ \bfx \in \Omega \\
0,  \quad\mbox{if}~ \bfx \in \Omega_{\mathrm{pml}}
\end{cases}, C_2 = \begin{cases}
0,  \quad\mbox{if}~ \bfx \in \Omega \\
1,  \quad\mbox{if}~ \bfx \in \Omega_{\mathrm{pml}}
\end{cases}.
\end{eqnarray}
 
 The damping functions $\sigma_x$ and $\sigma_y$ for the PML are chosen as a fourth-order polynomial:
$$ \sigma_x(x) = \begin{cases}
\sigma_{\max}(\frac{x-b_x}{dd})^4, & \mbox{when}~ x \geq b_x, \\
\sigma_{\max}(\frac{x - a_x}{dd})^4, & x \leq a_x, \\
0, & \text{elsewhere},
\end{cases} $$ 
where the coefficient $\sigma_{\max} = -\log(\mathrm{err}) \cdot 5 / (2\cdot \mathrm{dd} \cdot \eta)$ 
with $\mathrm{err}=10^{-7}$, $\eta = 377$ is the impedance of free space, and $dd$ denotes the thickness of the PML in the $x$ direction. 
The function $\sigma_y $ has the same form but varies with respect to the $y$ variable. 

In the rest of our simulations, we choose a physical domain $\O=[a_x, b_x]~\mu \mathrm{m}\times [a_y, b_y]~\mu \mathrm{m}$,
which is partitioned by a regular unstructured triangular mesh and is surrounded by the Berenger's PML with  thickness $12h_x$ and $12h_y$ in
the $x$ and $y$ directions, respectively. We denote $h_x$ and $h_y$ for the maximum mesh sizes in the $x$ and $y$ directions. The physical domain for the first three examples is a rectangle domain $\Omega = [-30, 30]~\mu \mathrm{m} \times [-10, 10]~\mu \mathrm{m}$ with $h_x=0.6~\mu \mathrm{m}, h_y=0.2~\mu \mathrm{m}$, 
and the domain for the last two examples is $\Omega = [-20, 20]~\mu \mathrm{m} \times [-20, 20]~\mu \mathrm{m}$ with $h_x=h_y=0.1~\mu \mathrm{m}$. In our simulations, we choose the relaxation time $\tau_0=1.2$ picoseconds (ps), the surface conductivity $\sigma_0$ given as:
\begin{eqnarray}
    && \sigma_0 = -\frac{q^2k_BT\tau}{\pi\hbar^2}\left(\frac{\mu_c}{k_BT}+2\ln(\exp(-\frac{\mu_c}{k_BT})+1)\right), \nonumber 
\end{eqnarray}
where we denote the electron charge $q=1.6022 \times e^{-19}\mathrm{C}$, the temperature $T=300~\mathrm{K}$, the reduced Plank constant $\hbar=1.0546 \times e^{-34} \mathrm{J} \cdot \mathrm{s}$, the Boltzmann constant $k_B=1.3806 \times e^{-23} \mathrm{J} \cdot \mathrm{K}^{-1}$, and the chemical potential $\mu_c=1.5~eV$ for Examples 1-4 and $\mu_c=0.8~eV$ for Example 5.

We use the time step $\tau = 8.3 \times 10^{-17}~s$, and run the  simulation for 10000 time steps for Example 3, 20000 time steps for Examples 1, 2, 4, and 100000 time steps for Example 5.

\subsubsection{Example 1. Bifurcated graphene sheets}

In this example, we firstly reproduce the results in \cite[Example 4]{Li_CMAWA2023} with our new scheme. The same setup with exemplary coarse mesh is shown in Figure \ref{bifurcate_straight_mesh}. 
\begin{figure}[H]
  \begin{center}
      \includegraphics[width=0.8\textwidth]{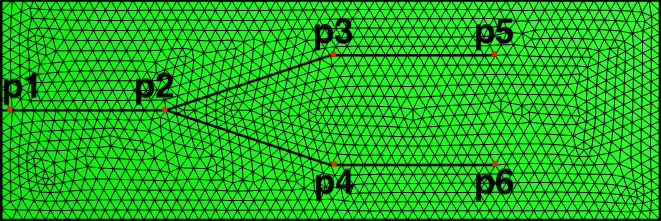}
 \caption{A bifurcated graphene sheet buried in vacuum with an exemplary coarse mesh. The structure is constructed by segments $\mathrm{\mathrm{p}}_1\mathrm{p}_2, \mathrm{p}_2\mathrm{p}_3, \mathrm{p}_3\mathrm{p}_4, \mathrm{p}_2\mathrm{p}_5, \mathrm{p}_5\mathrm{p}_6$. The coordinates for the marked points on the graph are correspondingly $\mathrm{p}_1(-30, 0) \mu \mathrm{m}, \mathrm{p}_2(-15, 0) \mu \mathrm{m}, \mathrm{p}_3(0, 5)\mu \mathrm{m}, \mathrm{p}_4(15, 5) \mu \mathrm{m}, \mathrm{p}_5(0, -5) \mu \mathrm{m}, \mathrm{p}_6(15, -5)\mu \mathrm{m}$.}
 \label{bifurcate_straight_mesh}
  \end{center}
\end{figure}

Snapshots of the magnetic field Hz are presented in Figure \ref{bifurcate_curve}, which clearly shows that a surface wave propagates along the graphene sheet same as \cite[Figure 8]{Li_CMAWA2023}.
To show the efficiency of our new algorithm, we made a comparison for the computational time between our new algorithm and the old algorithm proposed in our previous work \cite{Li_CMAWA2023}. The tests are carried out using NGSolve on a Mac mini with an Apple M2 chip. Under the same setup, our new algorithm takes 10,327.50 seconds, while the old algorithm needs 19,114.09 seconds. The new algorithm demonstrates a significant improvement in the computational time.

\begin{figure}[H]
 \begin{center}
    \begin{minipage}[b]{\textwidth}
    \includegraphics[width=0.5\textwidth]{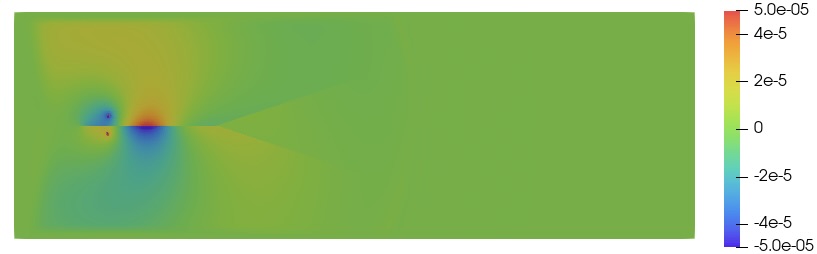}
    \includegraphics[width=0.5\textwidth]{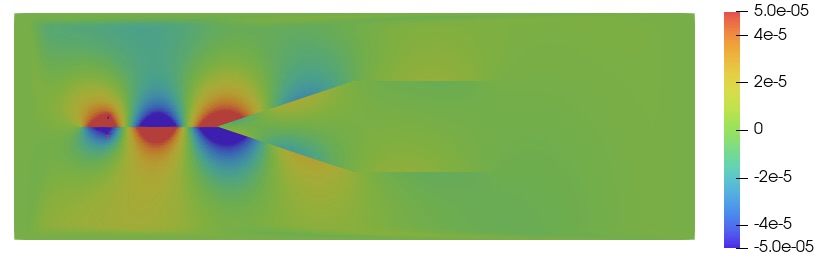}
    \includegraphics[width=0.5\textwidth]{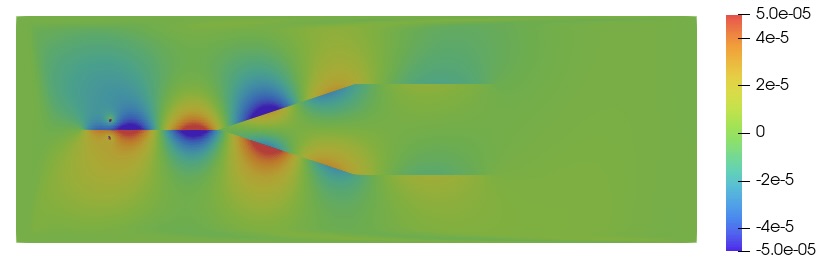}
    \includegraphics[width=0.5\textwidth]{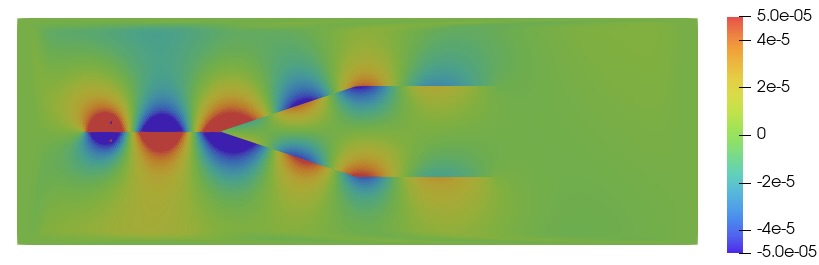}
    \includegraphics[width=0.5\textwidth]{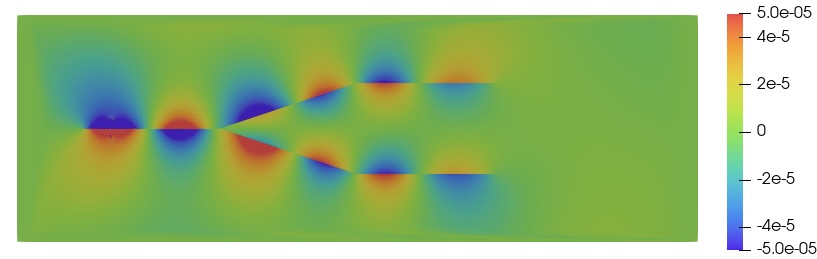}
    \includegraphics[width=0.5\textwidth]{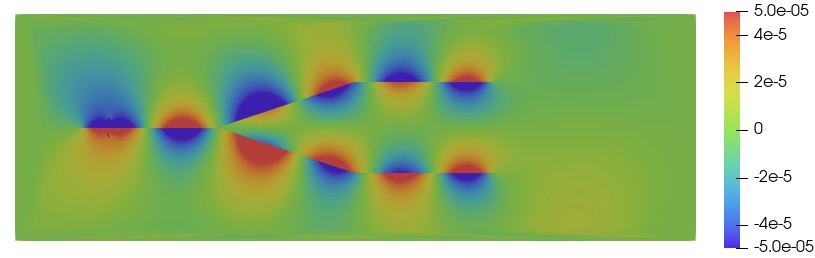}
  \end{minipage}
\hfill
  \caption{Snapshots of the contour plots of $H_z$ obtained at different time steps: 1000 (top left), 3000 (top right), 4000 (middle left), 5500 (middle right), 6000 (bottom left), and 10000 (bottom right).}
    \label{bifurcate_curve}
\end{center}
\end{figure}

Then we carry out a numerical simulation of the SPP propagation on a bifurcated curly graphene sheet. The setup with an exemplary coarse mesh is shown in Figure \ref{bifurcate_curve_mesh}. 

\begin{figure}[H]
  \begin{center}
      \includegraphics[width=0.8\textwidth]{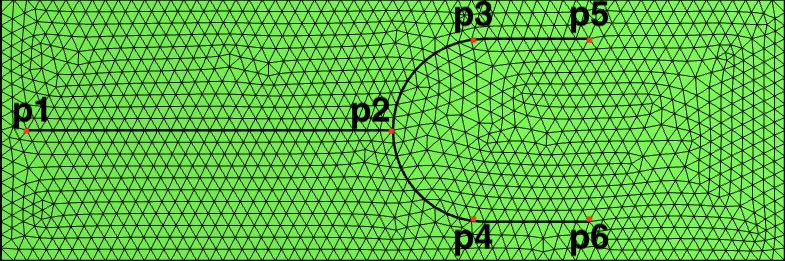}
 \caption{A bifurcated graphene sheet buried in vacuum with an exemplary coarse mesh. The structure is constructed by segments $\mathrm{p}_1\mathrm{p}_2, \mathrm{p}_3\mathrm{p}_5, \mathrm{p}_4\mathrm{p}_6$, and a semicircle centered at $(7, 0)\mu \mathrm{m}$ with radius $r=7 \mu \mathrm{m}$. The coordinates for the marked points on the graph are correspondingly $\mathrm{p}_1(-28, 0) \mu \mathrm{m}, \mathrm{p}_2(0, 0) \mu \mathrm{m}, \mathrm{p}_3(7, 7)\mu \mathrm{m}, \mathrm{p}_4(7, -7) \mu \mathrm{m}, \mathrm{p}_5(15, 7) \mu \mathrm{m}, \mathrm{p}_6(15, -7)\mu \mathrm{m}$.}
 \label{bifurcate_curve_mesh}
  \end{center}
\end{figure}

A pair of dipole source waves is placed at $(-27 \mu \mathrm{m}, 1 \mu \mathrm{m}) \text{ and } (-27 \mu \mathrm{m}, -1 \mu \mathrm{m})$ and imposed as $K_s=\pm \sin(2\pi f_0t)/h$ with $f_0=10~THz$ and $h=h_y$.
Snapshots of the magnetic field $H_z$  are presented in Figure \ref{Fig3}, which clearly show that 
 a surface wave propagates along the graphene sheet. 

\begin{figure}[H]
 \begin{center}
    \begin{minipage}[b]{\textwidth}
    \includegraphics[width=0.5\textwidth]{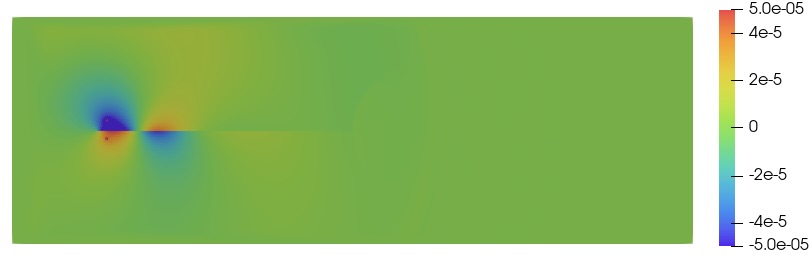}
    \includegraphics[width=0.5\textwidth]{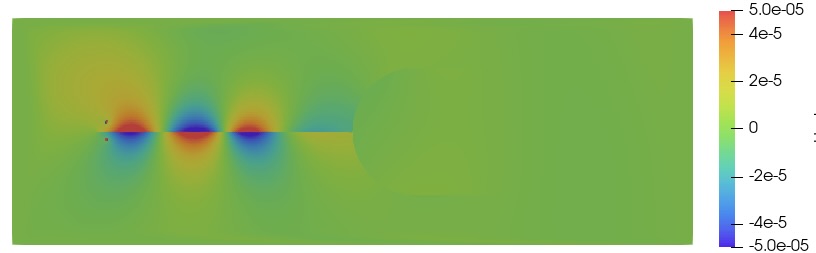}
    \includegraphics[width=0.5\textwidth]{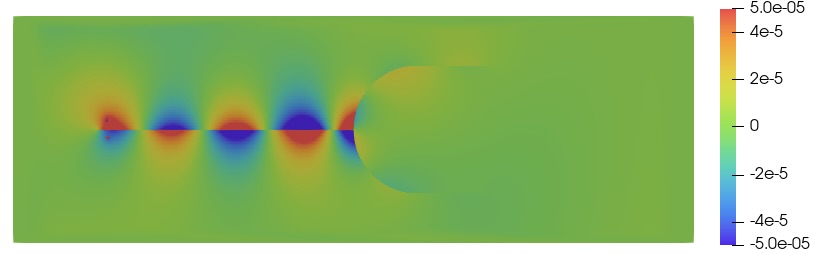}
    \includegraphics[width=0.5\textwidth]{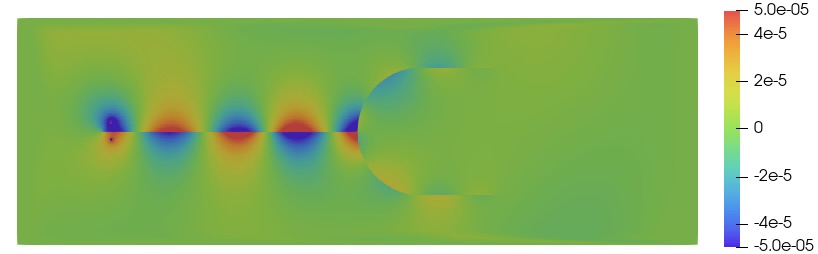}
    \includegraphics[width=0.5\textwidth]{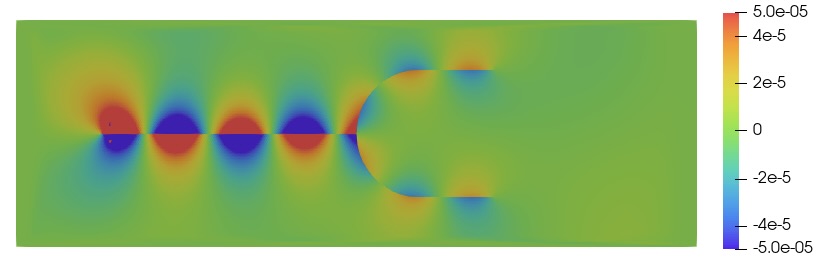}
    \includegraphics[width=0.5\textwidth]{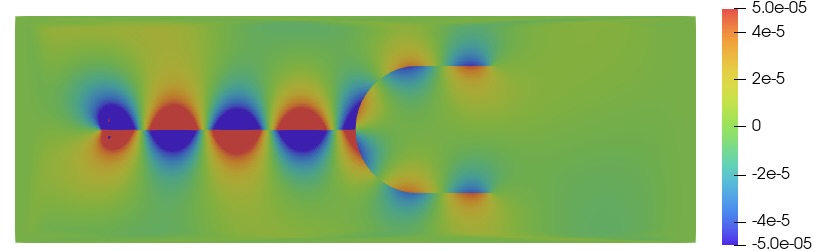}
  \end{minipage}
\hfill
 \end{center}
  \caption{Snapshots of the contour plots of $H_z$ obtained at different time steps: 1000 (top left), 3000 (top right), 5200 (middle left), 8000 (middle right), 10000 (bottom left), and 18800 (bottom right).}
\label{Fig3}  
\end{figure}

\subsubsection{Example 2. Four adjacent curved graphene sheets}
In this example, we present a numerical simulation of SPPs propagating along four separate
but adjacent  graphene sheets by our scheme. The simulation setup is shown in Figure \ref{adjacent_mesh}, 
 where four adjacent curved graphene sheets are embedded inside the physical domain $\O$. 
A pair of dipole source wave is placed at $(-18 \mu \mathrm{m}, 3.5 \mu \mathrm{m}) \text{ and } (-18 \mu \mathrm{m}, 2.5 \mu \mathrm{m})$. The other setup is the same as Example 1.  

\begin{figure}[H]
  \begin{center}      \includegraphics[width=0.8\textwidth]{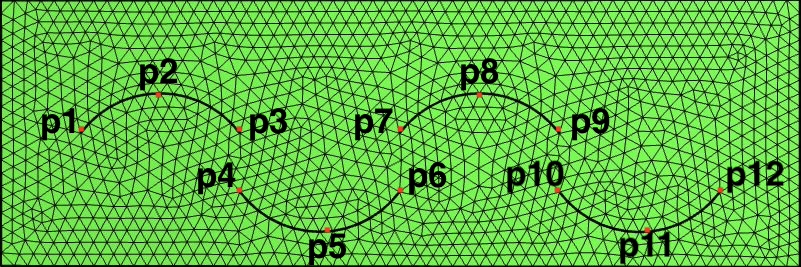}
 \caption{The setup of four adjacent curved graphene sheets and a sample coarse mesh. The structure is constructed by 4 circular arcs. The coordinates for the marked points on the graph are correspondingly $\mathrm{p}_1(-24, 0)\mu \mathrm{m}, \mathrm{p}_2(-18, 3)\mu \mathrm{m}, \mathrm{p}_3(-12, 0)\mu \mathrm{m}, \mathrm{p}_4(-12, -2h_y)\mu \mathrm{m}, \mathrm{p}_5(-6, -3-2h_y)\mu \mathrm{m},\\ \mathrm{p}_6(0, -2h_y)\mu \mathrm{m}, \mathrm{p}_7(0, 0)\mu \mathrm{m}, \mathrm{p}_8(6, 3)\mu \mathrm{m}, \mathrm{p}_9(12, 0)\mu \mathrm{m}, \mathrm{p}_{10}(12, -2h_y)\mu \mathrm{m}, \mathrm{p}_{11}(-18, -3-2h_y)\mu \mathrm{m},\\ \mathrm{p}_{12}(-24, -2h_y)\mu \mathrm{m}$.}
\label{adjacent_mesh}
\end{center}
\end{figure}

Snapshots of the magnetic field $H_z$  are presented in Figure \ref{adjacent}, which show that 
the surface wave propagates along the graphene sheets even though they are separated.  

\begin{figure}[H]
 \begin{center}
    \begin{minipage}[b]{\textwidth}
    \includegraphics[width=0.5\textwidth]{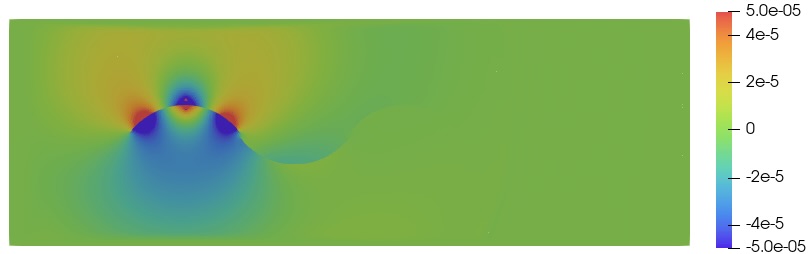}
    \includegraphics[width=0.5\textwidth]{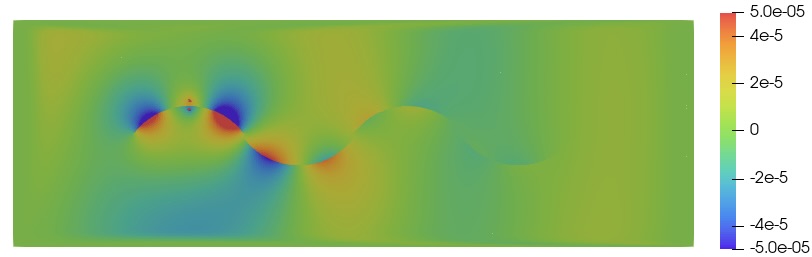}
    \includegraphics[width=0.5\textwidth]{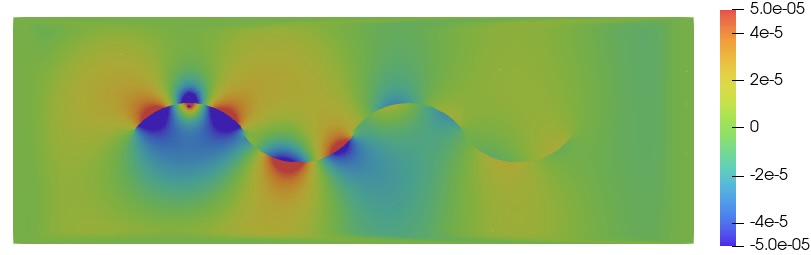}
    \includegraphics[width=0.5\textwidth]{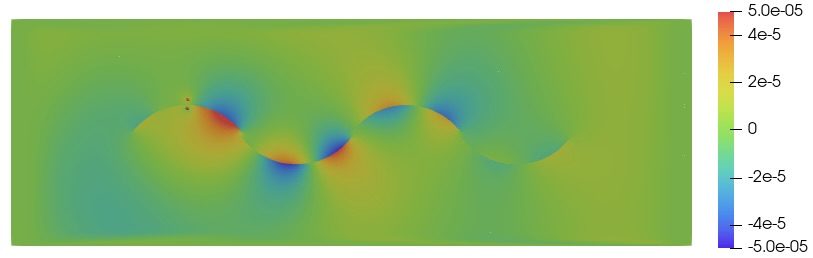}
    \includegraphics[width=0.5\textwidth]{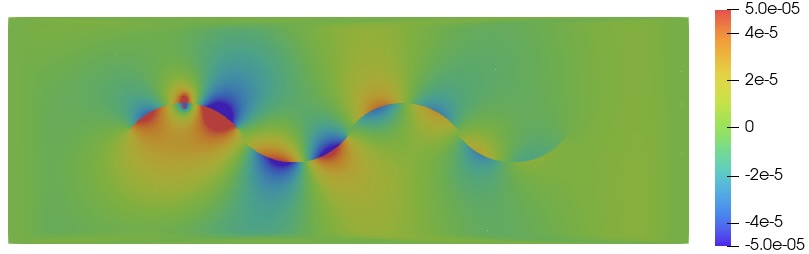}
    \includegraphics[width=0.5\textwidth]{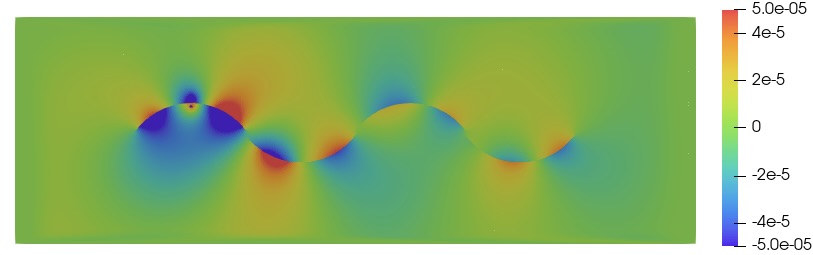}
  \end{minipage}
\hfill
 \end{center}
  \caption{Snapshots of the contour plots of $H_z$ at different time steps: 1000 (top left), 3600 (top right), 4400 (middle left), 6000 (middle right), 11000 (bottom left), and 18000 (bottom right). }
   \label{adjacent}
\end{figure}

\subsubsection{Example 3. A bulb shaped graphene sheet}
In this example, we present a numerical simulation of the SPP propagating along a bulb shaped graphene sheet. The setup with an exemplary coarse mesh is shown in Figure \ref{bulb_mesh}. One pair of dipole source wave is placed at $(-15 \mu \mathrm{m}, 2.5 \mu \mathrm{m}) \text{ and } (-15 \mu \mathrm{m}, 1.5 \mu \mathrm{m})$, and 
another pair is placed at $(-15 \mu \mathrm{m}, -2.5 \mu \mathrm{m}) \text{ and } (-15 \mu \mathrm{m}, -1.5 \mu \mathrm{m})$  with the same function given as Example 1. We adopt the same parameters as Example 1. 

The numerical results are shown in Figure \ref{Fig7}. As we can see, the surface wave propagates along the graphene sheet. 

\begin{figure}[H]
  \begin{center}
      \includegraphics[width=0.8\textwidth]{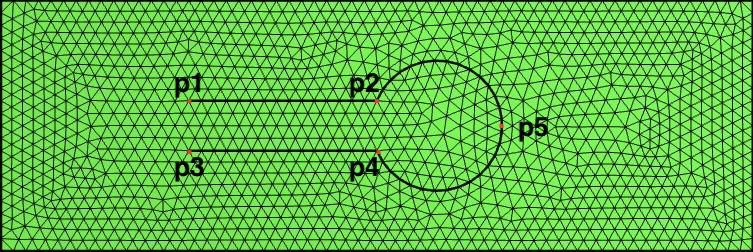}
 \caption{A bulb shaped graphene sheet buried in $\O$ with an exemplary coarse mesh. The structure is constructed by two segments $\mathrm{p}_1\mathrm{p}_2, \mathrm{p}_3\mathrm{p}_4$ and one circular arc. The coordinates for the marked points on the graph are correspondingly 
$\mathrm{p}_1(-15, 2)\mu \mathrm{m}, \mathrm{p}_2(0, 2)\mu \mathrm{m}, \mathrm{p}_3(-15, -2)\mu \mathrm{m}, \mathrm{p}_4(0, -2)\mu \mathrm{m}, \mathrm{p}_5(10, 0)\mu \mathrm{m}$.}
    \label{bulb_mesh}
  \end{center}
\end{figure}

\begin{figure}[H]
    \begin{minipage}[b]{\textwidth}
    \includegraphics[width=0.5\textwidth]{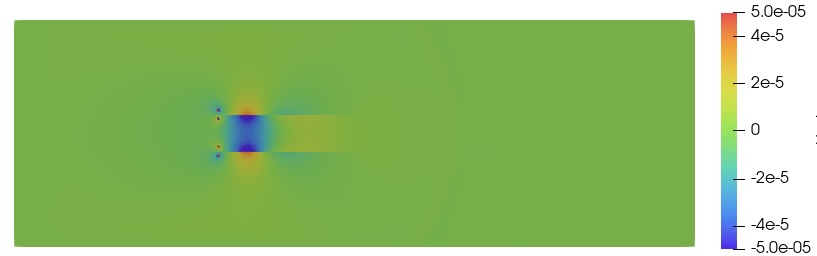}
    \includegraphics[width=0.5\textwidth]{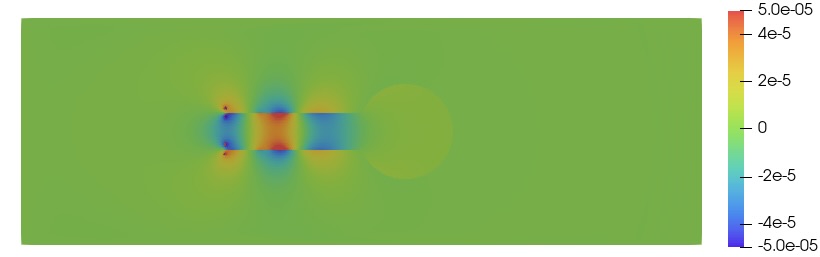}
    \includegraphics[width=0.5\textwidth]{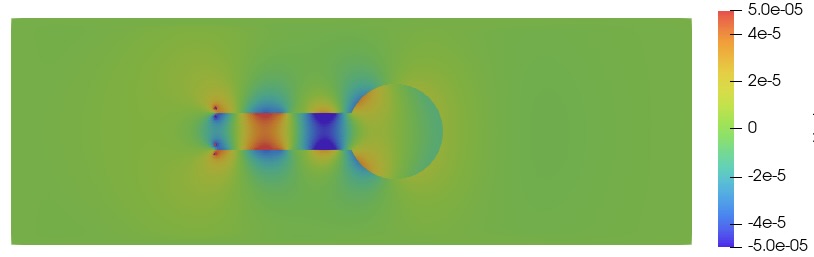}
    \includegraphics[width=0.5\textwidth]{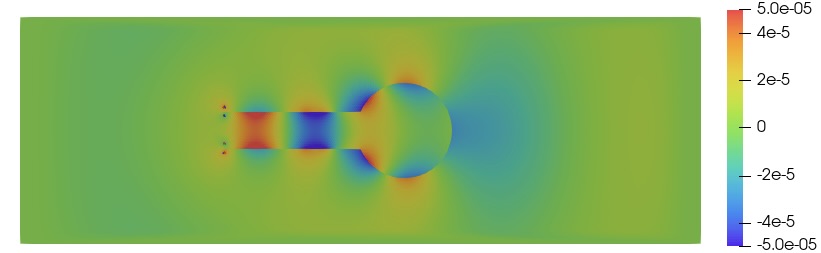}
    \includegraphics[width=0.5\textwidth]{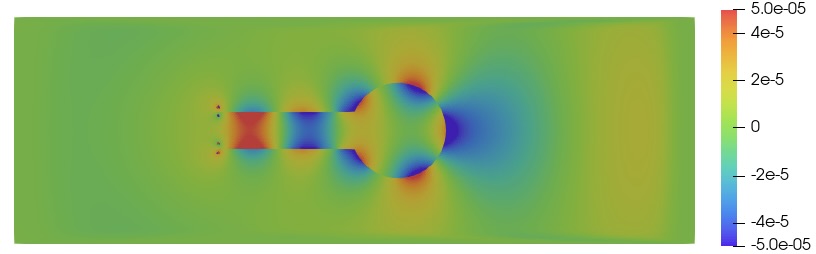}
    \includegraphics[width=0.5\textwidth]{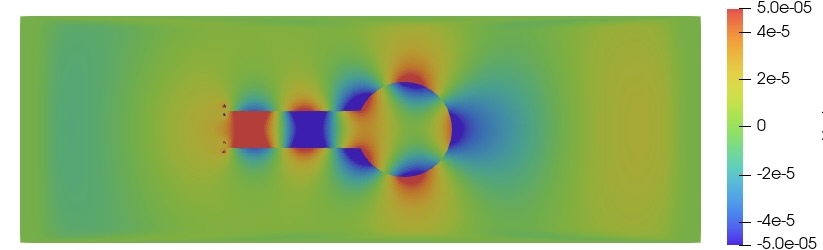}
  \end{minipage}
  \caption{Snapshots of the contour plots of $H_z$ obtained at different time steps: 100 (top left), 1800 (top right), 3000 (middle left), 4000 (middle right), 5200 (bottom left), and 10000 (bottom right). }
\label{Fig7}
\end{figure}

\subsubsection{Example 4: Ring resonator graphene interface}
In this example, we present a numerical simulation of an optical ring resonator motivated by the paper \cite{Davoodi2017}. The resonator consists of a graphene ring with a radius of $11\mu \mathrm{m}$, centered at the origin (0,0), embeded within the domain $\O_0$. Two graphene segments, each with $30\mu \mathrm{m}$ in length are positioned at $y=13\mu \mathrm{m}$ and $y=-13\mu \mathrm{m}$, respectively. The setup with a coarse mesh is shown in Figure 8. A pair of dipole source wave is placed at $(-13 \mu \mathrm{m}, 13.5\mu \mathrm{m}) \text{ and } (-13 \mu \mathrm{m}, 12.5 \mu \mathrm{m})$ with the same parameters as Example 1.

The numerical results are displayed in Figure \ref{ring}. The input wave propagates through the top graphene segment, reaching the transmission port and exiting to the left of the bottom graphene segment. This behavior mirrors the phenomenon observed in \cite[Fig.6(a)]{Davoodi2017}, where the input wave enters through the bottom graphene segment and exits to the left of the top graphene segment. The opposite direction in our setup is due to the position of the source wave. 

\begin{figure}[H]
\begin{center}
     \includegraphics[width=0.3\textwidth]{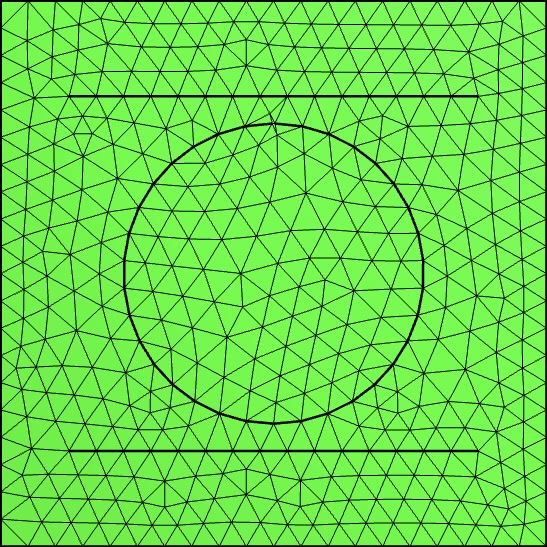}
\end{center}
 \caption{The graphene interface in $\O_2$ with a coarse mesh.}
\label{ring_mesh}
\end{figure}

\begin{figure}[H]
 \begin{center}
    \begin{minipage}[b]{\textwidth}
    \includegraphics[width=0.33\textwidth]{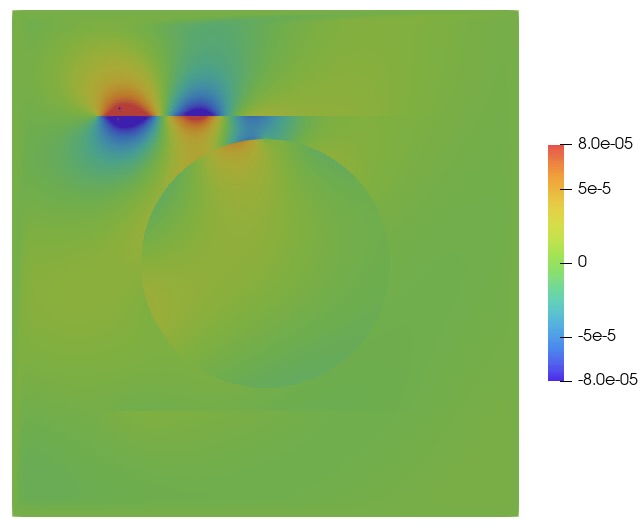}
    \includegraphics[width=0.33\textwidth]{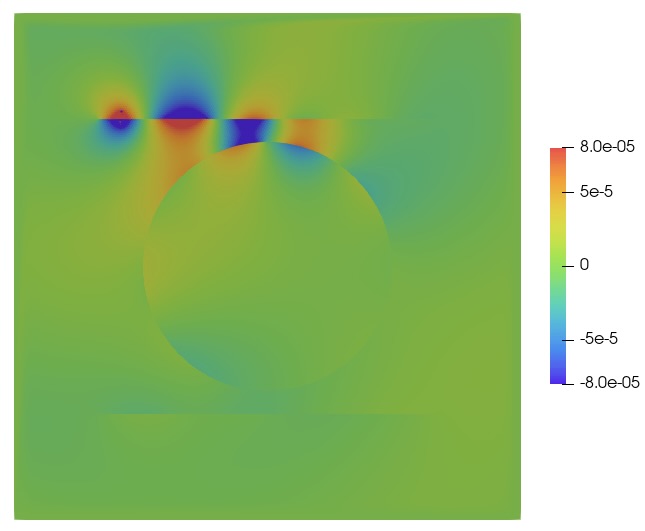}
    \includegraphics[width=0.33\textwidth]{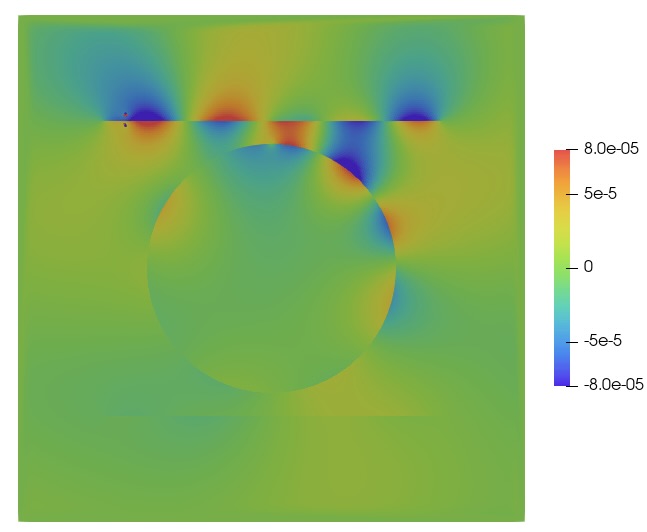}
    \end{minipage}

    \begin{minipage}[b]{\textwidth}
    \includegraphics[width=0.33\textwidth]{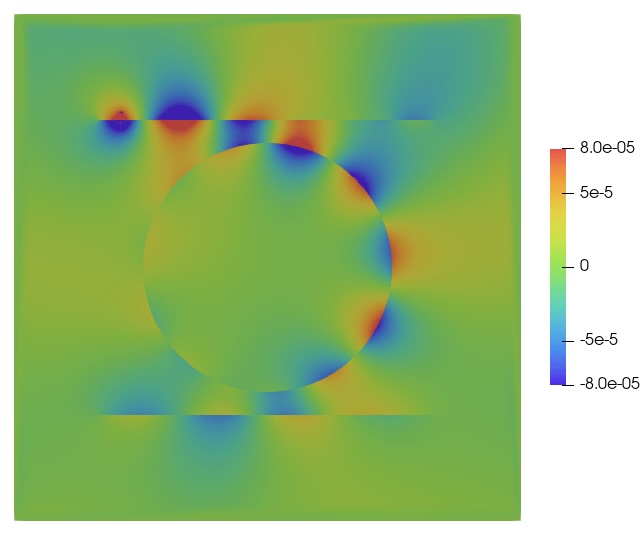}
    \includegraphics[width=0.33\textwidth]{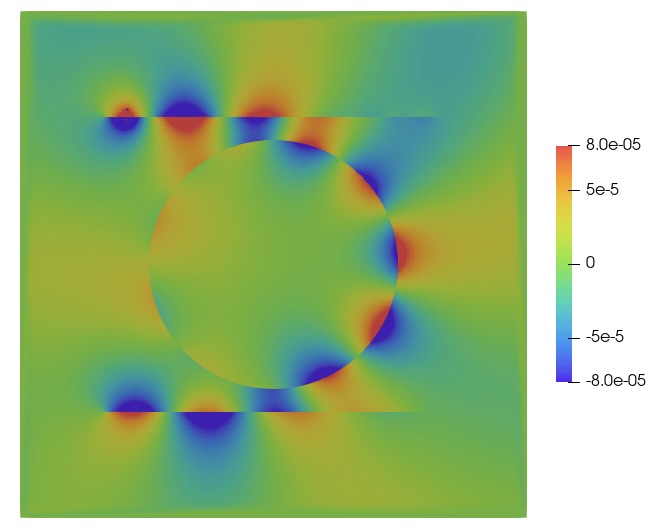}
    \includegraphics[width=0.33\textwidth]{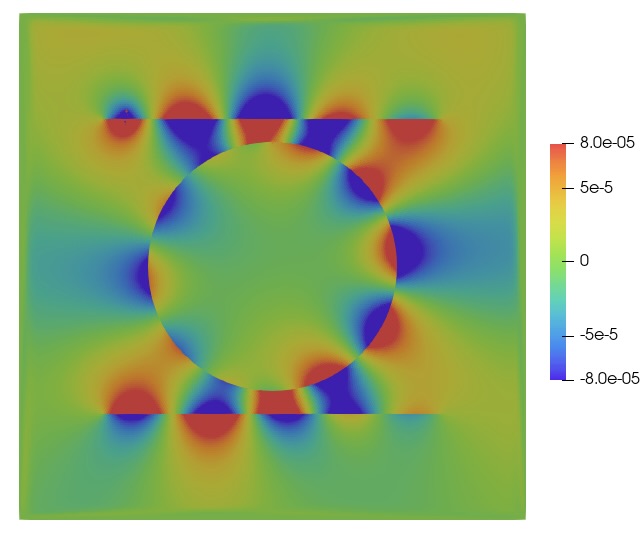}  \end{minipage}
 \end{center}
  \caption{Snapshots of the contour plots of $H_z$ obtained at different time steps:
2000 (top left), 3000 (top middle), 5000 (top right), 7500 (bottom left), 11250 (bottom middle), and 19000 (bottom right). }
  \label{ring}
\end{figure}

\subsubsection{Example 5: A spiral graphene interface}

In this example, we present a numerical simulation of SPP surface wave propagation along a spiral graphene interface inspired by \cite{wang2015broadband} with our scheme. A pair of dipole source wave is placed at $(-16 \mu \mathrm{m}, -18.5 \mu \mathrm{m}) \text{ and } (-16 \mu \mathrm{m}, -17.5 \mu \mathrm{m})$. 

The obtained numerical magnetic fields $H_z$ at various time steps are presented in Figure \ref{spiral}. As we can see, the wave propagates along the graphene interface. 
\begin{figure}[H]
\begin{center}
    \includegraphics[width=0.3\textwidth]{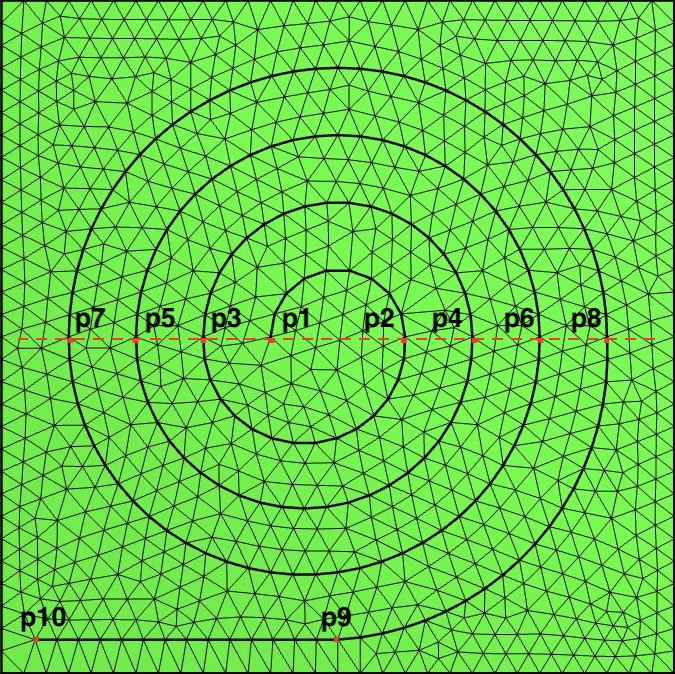}
 \caption{The graphene interface in $\O_2$ with a coarse mesh. The spiral structure is constructed by 7 semicircles, one quarter circle and one segment. Let $w=4 \mu \mathrm{m}$, and the coordinates for the marked points on the graph are correspondingly $p_1(-w,0), p_2(w,0), p_3(-2w,0), p_4(2w,0), p_5(-3w,0), p_6(3w,0), p_7(-4w,0), p_8(4w,0), p_9(0,-4.5w), \\p_{10}(-4.5w, -4.5w)$.} 
 \label{spiral_mesh}
\end{center}
\end{figure}
 
\begin{figure}[H]
 \begin{center}
    \begin{minipage}[b]{\textwidth}
    \includegraphics[width=0.33\textwidth]{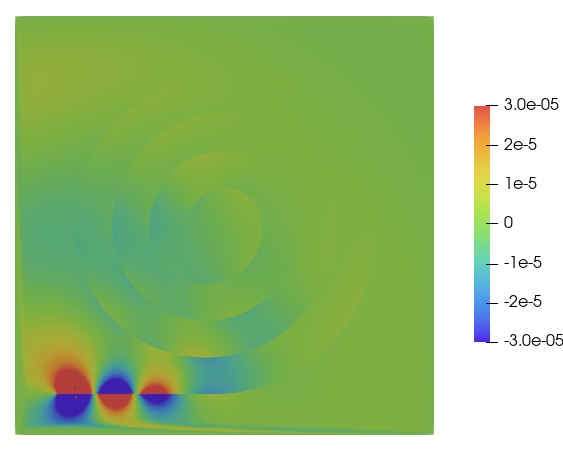}
    \includegraphics[width=0.33\textwidth]{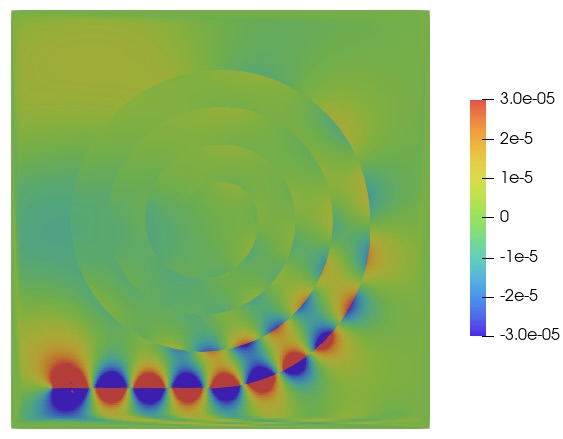}
    \includegraphics[width=0.33\textwidth]{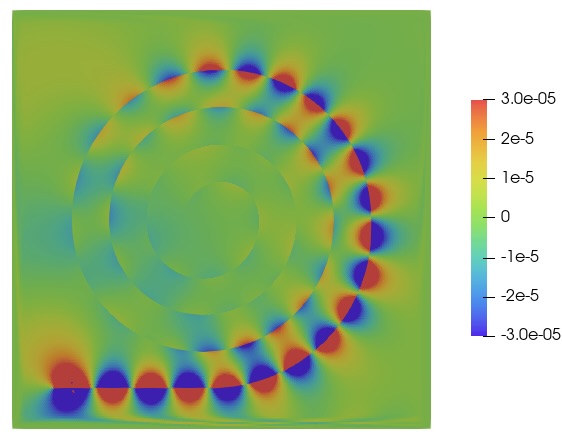}
    \includegraphics[width=0.33\textwidth]{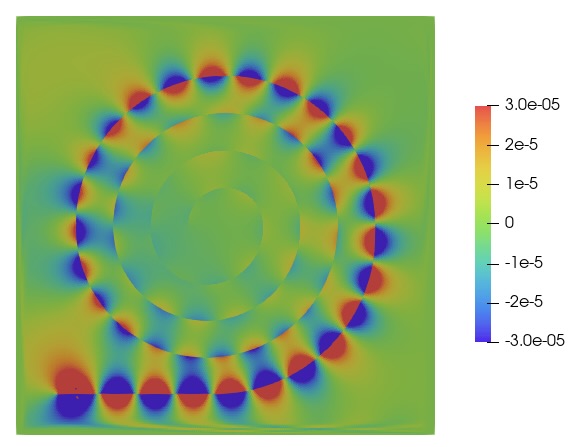}
    \includegraphics[width=0.33\textwidth]{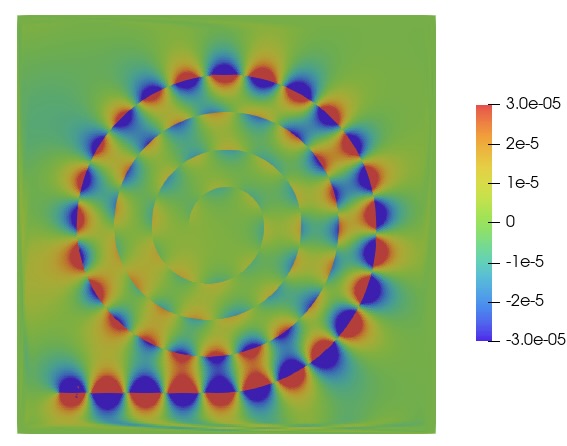}
    \includegraphics[width=0.33\textwidth]{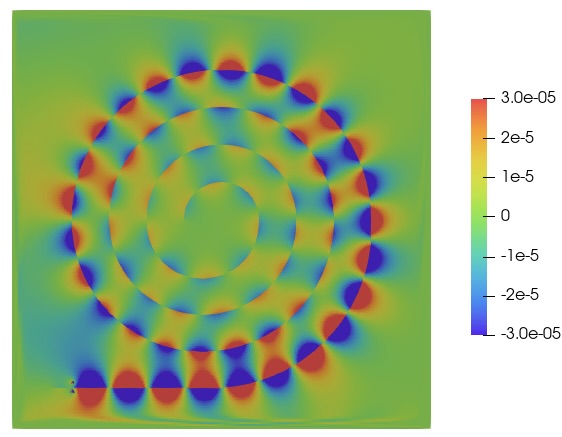}
  \end{minipage}
 \end{center}
  \caption{Snapshots of the contour plots of $H_z$ obtained at various time steps: 2000 (top left), 8000 (top middle), 20000 (top right), 32000 (bottom left), 72000 (bottom middle), and 100000 (bottom right). }
  \label{spiral}
\end{figure}

\section{Conclusion}

In this paper,  we first developed a simplified graphene model by eliminating the surface current variable from the graphene model adopted in our previous work \cite{Li_CMAWA2023}. Then we established the stability for the reformulated PDE model, and proposed a new finite element method for solving it. Extensive numerical simulations were carried out to demonstrate that the reformulated model captures the  surface plasmon polaritons very efficiently for various complex graphene sheets. Moreover, our new algorithm significantly improves computational efficiency, as shown by the time comparison results in Example 1, making it a more effective approach for simulating graphene-based plasmonic phenomena.

\vskip 0.1in
{\bf Acknowledgments.} 
Dr. J. Li is very grateful to Dr. Frederic Marazzato at University of Arizona for some insightful discussions. The authors like to thank the two anonymous referees for their insightful suggestions on improving our paper.

\section{Declarations}
{\bf Funding} Zhu's work is supported by the NSF grant 2136228 “RTG: Program in Computation- and Data-Enabled Science”.

\vskip 0.1in
{\bf Conflicts of Interests/Competing Interest} The authors have no conflicts of interest to declare that are relevant to the content of this article.

\end{document}